\long\def\symbolfootnote[#1]#2{\begingroup\def\thefootnote{\fnsymbol{footnote}}
\footnote[#1]{#2}\endgroup}
\begin{document}

\newcounter{rownum}
\setcounter{rownum}{0}
\newcommand{\ab}{\addtocounter{rownum}{1}\arabic{rownum}}
\newcommand{\im}{\mathrm{im}}
\newcommand{\x}{$\times$}

\newtheorem{lemma}{Lemma}[subsection]
\newtheorem{theorem}[lemma]{Theorem}
\newtheorem{corollary}[lemma]{Corollary}
\newtheorem{conjecture}[lemma]{Conjecture}
\newtheorem{prop}[lemma]{Proposition}
\theoremstyle{remark}
\newtheorem{remark}[lemma]{Remark}
\newtheorem{example}[lemma]{Example}
\theoremstyle{definition}
\newtheorem{defn}[lemma]{Definition}
\newtheorem{question}[lemma]{Question}

\renewcommand{\labelenumi}{(\roman{enumi})}
\newcommand{\Hom}{\mathrm{Hom}}
\newcommand{\Aut}{\mathrm{Aut}}
\newcommand{\Ext}{\mathrm{Ext}}
\newcommand{\Mor}{\mathrm{Mor}}
\newcommand{\soc}{\mathrm{Soc}}
\newcommand{\Lie}{\mathrm{Lie\ }}

\newcommand{\F}{\mathbb{F}}
\newcommand{\G}{\mathbf{G}}

\parindent=0pt
\addtolength{\parskip}{0.5\baselineskip}

\title{On unipotent algebraic $G$-groups and $1$-cohomology}
\author{David I. Stewart}
\address{New College, Oxford, OX1 3BN}
\email{dis20@cantab.net}
\subjclass[2010]{Primary 20G07, 20G10; Secondary 18G50}
\maketitle

{\small {\bf Abstract.}  In this paper we consider non-abelian $1$-cohomology for groups with coefficients in other groups. We prove  versions of the `five lemma' arising from this situation. We go on to show that a 
connected unipotent algebraic group $Q$ acted on morphically by a 
connected
algebraic group $G$ admits a filtration with successive quotients having the structure of $G$-modules. From these results we deduce extensions to results due to Cline, Parshall, Scott and van der Kallen. First, if $G$ is a connected, reductive algebraic group with Borel subgroup $B$ and $Q$ a unipotent algebraic $G$-group, we show the restriction map $H^1(G,Q)\to H^1(B,Q)$ is an isomorphism. We also show that this situation admits a notion of rational stability and generic cohomology. We use these results to obtain corollaries about complete reducibility and subgroup structure.}
\section{Introduction}
The low degree cohomology of a group $G$ with coefficients in another group $Q$ on which $G$ acts gives rise to important invariants in group theory. For instance in degree $0$, $H^0(G,Q)$ is the group of fixed points of the action of $G$ on $Q$, $H^1(G,Q)$ is the set of conjugacy classes of complements to $Q$ in the semidirect product $GQ$, and $H^2(G,Q)$ (where defined) measures the number of non-equivalent group extensions \[1\to Q\to E\to G\to 1.\]
In this paper we consider the situation where $Q$ is in general, a non-abelian $G$-group. In such a situation $H^1(G,Q)$ is just a pointed set (in contrast to the situation that $Q=V$ is a representation for $G$, when $H^1(G,V)$ is a vector space). One needs to work harder to employ the usual tools of cohomology since exact sequences do not, unfortunately, say very much about pointed sets; for instance, if $1\to A\to B$, is exact, the map $A\to B$ need not be an injection, merely that 
anything mapping to the distinguished element in $B$ 
 should have been the distinguished element in $A$. However,
  one can usually rescue the situation for non-abelian $1$-cohomology by employing `twisting', a technique familiar 
from the theory of Galois cohomology.

Before we describe our results, let us say at the outset, that we will be consider in detail the situation where $G$ is a (possibly reductive) algebraic group over an algebraically closed field $k$, and $Q$ is a unipotent algebraic $G$-group. However up to the end of \S3.1 all our results apply equally in the category of abstract groups. For clarity we give our results up to this point in the abstract setting and then in \S3.2 indicate the appropriate modifications to use these results in the rational (algebraic) situation; the proofs are the same, {\it mutatis mutandis}.

Letting $G$ first be an abstract group and $Q$ an abstract $G$-group, we give in \S\ref{defs}, the well-known definitions of the $0$th, $1$st and $2$nd cohomology groups of $G$ with coefficients in $Q$ in terms of cocycles (where these definitions exist) as well as the notion of twisting an action by a cocycle. When $Q$ fits into a short exact sequence \[1\to R\to Q\to S\to 1\] and where the image of $R$ in $Q$ is contained in the centre $Z(Q)$ of $Q$, there is an exact sequence of non-abelian cohomology (Proposition \ref{list}(i)). This exact sequence fits into various commutative diagrams (Proposition \ref{list}(ii)--(x)). In particular we consider the restriction of this exact sequence to a subgroup $B$ of $G$ and show how conditions on the restriction maps of cohomology $H^n(G,R)\to H^n(B,R)$ and $H^n(G,S)\to H^n(B,S)$ for $0\leq n\leq 2$ can be used to deduce properties of the restriction $H^1(G,Q)\to H^1(B,Q)$ (Theorems \ref{fivelemmaforh1} and \ref{fivelemmaforh1funct}). For abelian $Q$ this would come down to a simple application of the equally simple `five lemma' (see e.g. \cite[2.72]{Rot09}) but in the non-abelian case, we must use the technique of twisting.

We apply this in the special situation where $G$ is 
a connected
algebraic group acting (morphically) on a 
connected
unipotent algebraic group $Q$. We first show that $Q$ has a central filtration by $G$-modules (Theorem \ref{gmodfilt}). This filtration allows us to go further than the results obtained in \cite[\S6]{Ric82} where this general set-up was first considered; in particular, one can bring second cohomology into play.\footnote{We remark that after this paper was submitted it came to our attention that George McNinch has a preprint \cite{McN13} establishing the existence of such a central filtration, while working in the more general context of 
connected algebraic groups over arbitrary fields $k$. In such a situation one can have unipotent radicals of algebraic groups which are not defined over $k$. McNinch defines a condition \textbf{(R)}: the unipotent radical of $G$ is defined over $k$. He then proves that when \textbf{(R)} holds, such a filtration of $Q$ exists whenever $Q$ is split over $k$. Now, since these conditions hold automatically for $k=\bar k$ our result can be recovered from his. Our proofs, however, appear to be substantively different. The one given here is significantly shorter, no doubt owing to the technical difficulties involved in McNinch's more general result. The problem of whether there exists such a filtration of $Q$ when $G$ is disconnected (e.g. finite) remains open---see \cite[Question E]{McN13}.}

Perhaps surprisingly, it turns out that unipotent algebraic groups are just as good as vector spaces as far as low-degree cohomology is concerned. By way of demonstration, we can use our work to generalise several of the results in the paper \cite{CPSV77}. Consider the case where $G$ is connected and reductive; and let $P$ be a parabolic subgroup of $G$. In \cite{CPSV77}, it is proved that $H^n(G,V)\cong H^n(P,V)$ for any finite dimensional rational $G$-module $V$. We use our filtration to extend this result by applying our Theorem \ref{fivelemmaforh1} inductively. We show that $H^1(G,Q)\cong H^1(P,Q)$ for any unipotent algebraic group $Q$ on which $G$ acts morphically.

The main result of \cite{CPSV77} is the proof of the existence of rational stability and generic cohomology. Let $G$ be additionally defined and split over the prime subfield $\mathbb F_p$. If $V^{[r]}$ denotes the $r$th Frobenius twist of $V$ then it shown that for each $m\in\mathbb N$ and $V$ a finite dimensional module there is some $r_0$ with $H^m(G,V^{[r]})\cong H^m(G,V^{[r_0]})$ for any $r\geq r_0$. In addition, there is some $s_0$ such that for each $s\geq s_0$ restriction $H^m(G,V^{[r_0]})\to H^m(G(p^s),V^{[r_0]})\cong H^m(G(p^s),V)$ is an isomorphism. We extend each of these theorems in the case $m=1$, replacing $V$ with a unipotent algebraic group (Theorem \ref{rationalstab}).

The above has a number of corollaries on subgroup structure related to Serre's notion of $G$-complete reducibility. If $H$ is a closed, connected, reductive subgroup of a parabolic subgroup $P$ of $G$, one can find its image $\bar H$ under the map $\pi:P\to L$ for $L$ a Levi subgroup of $P$. One need not have $H$ a complement to $Q$ in $\bar HQ$, even though it is when considered as an abstract group. Where it is  a complement, it corresponds, up to $Q$-conjugacy, to an element of $H^1(\bar H,Q)$; we show in Lemma \ref{cnbn} in almost all cases, $H$ \emph{is} a complement, and even when it is not, it still corresponds to an element of $H^1(\bar H,Q^{[1]})$. This gives a cohomological characterisation of conjugacy classes of subgroups of parabolics. We can then prove results on $G$-complete reducibility using this characterisation, via a result of Bate, Martin, Roerhle and Tange. First, let $B$ be a Borel subgroup of $H$ with unipotent radical $U=R_u(B)$. Then $H$ is conjugate to a subgroup of $L$ if and only if $B$ is, if and only if $U$ is (Corollary \ref{gcr}). Second, for some $q$ which can be explicitly calculated, there is a finite reductive subgroup $H(q)$ of $H$ such that $H$ is conjugate to a subgroup of $L$ if and only if $H(q)$ is (Corollary \ref{finitegcr}).

\section{Preliminaries}
\subsection{Definitions}\label{defs}
We start with some standard material. Let $G$ be a group and let $Q$ be a $G$-set, that is, a set equipped with a right action of $G$.

Then we define $H^0(G,Q)$ as the set $Q^G$ of fixed points of $G$ acting on $Q$, i.e. $H^0(G,Q)=\{q\in Q: q^g=q, \forall g\in G\}$.

If further $Q$ is a $G$-group, then we may define the $1$-cohomology $H^1(G,Q)$ as follows.

First, a $1$-cocycle is a map $\gamma:G\to Q$ satisfying the cocycle condition $\gamma(gh)=\gamma(g)^h\gamma(h)$. The set of all $1$-cocycles is denoted by $Z^1(G,Q)$. We say that two $1$-cocycles $\gamma, \delta$ are cohomologous and write $\gamma\sim\delta$ if $\delta(g)=q^{-g}\gamma(g)q$ for some element $q\in Q$. Then we define $H^1(G,Q)$ as the set of equivalence classes of $Z^1(G,Q)$ under $\sim$; i.e. $H^1(G,Q)=Z^1(G,Q)/\sim$.

There is a distinguished point in $Z^1(G,Q)$ given by the trivial cocycle $\mathbf{1}:G\to Q$; $\mathbf{1}(g)=1$. The equivalence class of the trivial cocycle, $[\mathbf{1}]\subseteq Z^1(G,Q)$ is the set of coboundaries and we denote this distinguished point in $H^1(G,Q)$ by $B^1(G,Q)$. 

If $R$ is an abelian $G$-group we can go further and define the second cohomology: define a $2$-cocycle to be a map $\gamma:G\times G\to R$ satisfying the $2$-cocycle condition $\gamma(gh,k)\gamma(g,h)^k=\gamma(g,hk)\gamma(g,k)$. The set of all $2$-cocycles is denoted $Z^2(G,R)$. We say that two $2$-cocycles $\gamma$ and $\delta$ are cohomologous and write $\gamma\sim\delta$ if there is a map (morphism) $\phi:G\to R$ with $\delta(g,h)=\gamma(g,h)\phi(g)^h\phi(h)\phi(gh)^{-1}$. We then define $H^2(G,R)$ to be the set of equivalence classes of $Z^2(G,R)$ modulo $\sim$, i.e. $H^2(G,R):=Z^2(G,R)/\sim$. Again there is a distinguished point in $Z^2(G,R)$ given by the trivial $2$-cocycle $\mathbf{1}$ and its class in $H^2(G,R)$ is denoted by $B^2(G,R)$.

\subsection{Twisting, sequences and commutative diagrams}
Recall from \cite[I.\S5.3]{Ser94} the notion of twisting the action of $G$ on a $G$-group $Q$ by a cocycle in $Z^1(G,Q)$. To wit, fix an arbitrary $1$-cocycle $\gamma\in Z^1(G,Q)$ and define $q * g=(q^g)^{\gamma(g)}=\gamma(g)^{-1}q^g\gamma(g)$ for each $q\in Q$.\footnote{More generally, $R$ can be a further right $G$-set with a right action on $Q$ which commutes with the action of $G$ on $Q$, i.e. $(q^r)^g=q^{gr^g}$. Then one can define $q*g=q^{g\gamma(g)}$ for $q\in Q$, $g\in G$ and $\gamma\in Z^1(G,R)$.} One checks this is a new action of $G$ on $Q$. We denote the new $G$-group by $Q_\gamma$, which coincides with $Q$ as a group.

We give a few facts about the set-up so far.

\begin{prop}\label{list}
Let $1\to R\to\xymatrix{Q \ar @/^/[r]^\pi\ar@/_/@{<-}[r]_\sigma & S}\to 1$ be a short exact sequence of $G$-groups, with the image of $R$ in $Q$ contained in the centre $Z(Q)$ of $Q$ and with $\sigma$ a section of $\pi$ ($\sigma$ not necessarily a homomorphism). Additionally, fix $\gamma\in Z^1(G,Q)$ and denote the induced element in $Z^1(G,S)$ also by $\gamma$. Then the following hold:
\newcounter{saveenum}
\begin{enumerate}\item There is an exact sequence of cohomology
\begin{align*}1\to H^0(G&,R)\to H^0(G,Q)\to H^0(G,S)\\&\stackrel{\delta_G}{\to} H^1(G,R)\to H^1(G,Q)\to H^1(G,S)\stackrel{\Delta_G}{\to} H^2(G,R),\end{align*}
with $\delta_G(s)(g)=[\sigma(s)^{-g}\sigma(s)]$; $\Delta_G(\gamma)(g,h)=[\sigma(\gamma(g))^h\sigma(\gamma(h))\sigma(\gamma(gh)^{-1})]$.
\item Moreover, the association of the short exact sequence to the longer is functorial: Suppose we have another short exact sequence of $G$-groups \[1\to R'\to\xymatrix{Q' \ar @/^/[r]^{\pi'}\ar@/_/@{<-}[r]_{\sigma'} & S'}\to 1\] with the image of $R'$ in $Z(Q')$ and with $\sigma'$ a section of $\pi'$; and a commutative diagram 
\[\begin{CD} 1@>>> R @>>> Q @>>> S @>>>  1\\
@. @V\zeta VV @V\eta VV @V\theta VV @. \\
1@>>> R' @>>> Q' @>>> S' @>>>  1,\end{CD}\]where the vertical arrows are $G$-equivariant homomorphisms.
Then there is a commutative diagram \begin{align*}\minCDarrowwidth19pt \begin{CD} 1@>>> R^G @>>> Q^G @>>> S^G @>>>  H^1(G,R)\\
@. @V\zeta VV @V\eta VV @V\theta VV @V\zeta\circ\_ VV \\
1@>>> {R'}^G @>>> {Q'}^G @>>> {S'}^G @>>> H^1(G,R') \end{CD}\hspace{80pt}\\
\hspace{80pt}\begin{CD} @>>> H^1(G,Q)@>>> H^1(G,S)@>>> H^2(G,R)\\
@. @V\eta\circ\_ VV @V\theta\circ\_ VV @V\zeta\circ\_ VV \\
@>>> H^1(G,Q')@>>> H^1(G,S')@>>> H^2(G,R')\end{CD}\end{align*}
of the two sequences arising from (i) above.
\item There is another short exact sequence  of $G$-groups 
  \[1 \to R \to Q_\gamma \to S_\gamma \to 1\]
with the image of $R$ contained in the centre of $Q_\gamma$. Moreover in the situation of (ii), there is a commutative diagram \[\begin{CD} 1@>>> R @>>> Q_\gamma @>>> S_\gamma @>>>  1\\
@. @V\zeta VV @V\eta VV @V\theta VV @. \\
1@>>> R' @>>> Q_{\gamma'}' @>>> S_{\gamma'}' @>>>  1.\end{CD}\]
where $\gamma'$ indicates the cocycle $\eta\circ\gamma$. 
\item Thus we get by (i) a new exact sequence of cohomology:
{\small \[1\to R^G \to Q_\gamma^G \to S_\gamma^G \to  H^1(G,R) \to H^1(G,Q_\gamma) \to H^1(G,S_\gamma)\to H^2(G,R_\gamma),\]} which is also functorial in the manner of (ii).
\item \label{bijofh1}The map \[\theta_\gamma:H^1(G,Q_\gamma)\to H^1(G,Q);\ [\delta]\mapsto[\delta\gamma],\]
where $\delta\gamma$ denotes the map $g\mapsto\gamma(g)\delta(g)$, is a well-defined bijection, taking the trivial class in $H^1(G,Q_\gamma)$ to the class of $\gamma$ in $H^1(G,Q)$.\setcounter{saveenum}{\value{enumi}}
\end{enumerate}
Now let $B$ be a subgroup of $G$. Then: 
\begin{enumerate}  \setcounter{enumi}{\value{saveenum}}
\item Restriction to $B$ of the exact sequence of cohomology from (i) gives rise to the following commutative diagram, where the rows are exact and the vertical arrows are restrictions: 
\begin{align*}\minCDarrowwidth19pt \begin{CD} 1@>>> R^G @>>> Q^G @>>> S^G @>\delta_G>>  H^1(G,R)\\
@. @VVV @VVV @VVV @VVV \\
1@>>> R^B @>>> Q^B @>>> S^B @>\delta_B>>  H^1(B,R) \end{CD}\hspace{80pt}\\
\hspace{80pt}\begin{CD} @>>> H^1(G,Q)@>>> H^1(G,S)@>>> H^2(G,R)\\
@. @VVV @VVV @VVV \\
@>>> H^1(B,Q)@>>> H^1(B,S)@>>> H^2(B,R).\end{CD}
\end{align*}
\item If $\beta\in Z^1(B,Q)$ denotes the restriction of $\gamma$ to $B$, then we have the following commutative diagram
\[\begin{CD} 1@>>> R @>>> Q_\gamma @>>> S_\gamma @>>> 1\\
@. @VVV @VVV @VVV @. \\
1@>>> R @>>> Q_\beta @>>> S_\beta @>>> 1.\end{CD},\] where the vertical arrows are restrictions. Moreover,
\item restriction from $G$ to $B$ gives rise to the following commutative diagram, where the rows are exact and the vertical arrows are restrictions:
\begin{align*}\minCDarrowwidth19pt \begin{CD} 1@>>> R^G @>>> Q_\gamma^G @>>> S_\gamma^G @>\delta_G>>  H^1(G,R)\\
@. @VVV @VVV @VVV @VVV \\
1@>>> R^B @>>> Q_\beta^B @>>> S_\beta^B @>\delta_B>>  H^1(B,R) \end{CD}\hspace{80pt}\\
\hspace{80pt}\begin{CD} @>>> H^1(G,Q_\gamma)@>>> H^1(G,S_\gamma)@>>> H^2(G,R)\\
@. @VVV @VVV @VVV \\
@>>> H^1(B,Q_\beta)@>>> H^1(B,S_\beta)@>>> H^2(B,R).\end{CD}
\end{align*}
\item We have the following commutative diagram:
\begin{center}
$\begin{CD}H^1(G,Q)@>>> H^1(G,S)\\
@A\theta_\gamma AA @A\theta_\beta AA\\
H^1(G,Q_\gamma) @>>> H^1(G,S_\beta),\end{CD}$\end{center} where the vertical maps are the bijections of (iv) and the horizontal maps are induced by the $G$-homomorphism $Q\to S$.
\item We also have the following commutative diagram
\[\begin{CD} H^1(G,Q_\gamma) @>\theta_\gamma>> H^1(G,Q)\\
@VVV @VVV\\
H^1(B,Q_\beta) @>\theta_\beta>> H^1(B,Q)\end{CD}\]
where the vertical maps arise from restriction.
\end{enumerate}
\end{prop}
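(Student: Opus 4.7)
The proposition bundles together many standard facts about non-abelian $1$-cohomology (cf.\ Serre \cite[I.\S5]{Ser94}). The strategy is to establish (i), (iii) and (v) by direct cocycle calculation, and then obtain (ii), (iv) and (vi)--(x) as formal consequences of the naturality of these constructions with respect to $G$-homomorphisms and with respect to restriction to subgroups. The essential hypothesis throughout is that the image of $R$ lies in $Z(Q)$; this is used repeatedly, both to verify that various expressions constructed from the section $\sigma$ actually land in $R$ rather than in all of $Q$, and to rearrange conjugations so that cocycle identities collapse.

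For (i), I define the connecting maps directly from the set-theoretic section $\sigma$. Given $s\in S^G$, the expression $\delta_G(s)(g):=\sigma(s)^{-g}\sigma(s)$ lies in $\ker\pi=R$ because $s$ is $G$-fixed, and the $1$-cocycle condition is a short computation using that $R\subseteq Z(Q)$; changing the section alters $\delta_G(s)$ by a coboundary, so the map descends to $H^1(G,R)$. Similarly, $\Delta_G(\gamma)(g,h)$ measures the failure of $\sigma\gamma$ to be a homomorphism, lands in $R$ since $\pi\sigma=\mathrm{id}$, and satisfies the $2$-cocycle condition by a calculation which collapses because $R$ is central. Exactness at each of the seven positions is then a standard diagram chase, with centrality decisive at every step. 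For (iii), since $R\subseteq Z(Q)$ the twisted action $r*g=\gamma(g)^{-1}r^g\gamma(g)$ coincides with $r^g$, so $R\hookrightarrow Q_\gamma$ is $G$-equivariant; $\pi:Q_\gamma\to S_\gamma$ is equivariant by construction; and $R$ remains central since the underlying group of $Q_\gamma$ is unchanged. For (v), one checks that the pointwise product $\delta\mapsto\gamma\delta$ converts the twisted cocycle condition on $\delta\in Z^1(G,Q_\gamma)$ into the original cocycle condition, and descends to a bijection on cohomology classes whose inverse is the opposite twist.

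The remaining parts are then essentially formal. Part (ii) is functoriality: the subtle point is that $\theta\sigma$ and $\sigma'\theta$ need not agree, but they differ by an element of $R'$, and this discrepancy becomes trivial when pushed to $H^1(G,R')$, so the required squares commute. Part (iv) is (i) applied to the twisted sequence of (iii), with the explicit formulas carried over. Parts (vi)--(viii) restrict the same constructions to the subgroup $B$; commutativity of the resulting squares is automatic because the formulas for the connecting maps make sense verbatim for any subgroup and any compatible section. Parts (ix) and (x) reduce to tracing a cocycle through both routes of the square and using that $\pi(\gamma\delta)=\pi(\gamma)\pi(\delta)$ and that restriction commutes with pointwise products. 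The main obstacle is purely bookkeeping in (i): a single misplaced conjugation in the formulas for $\delta_G$ or $\Delta_G$ would propagate through every other part, so one must keep careful track of the non-commutativity of $Q$ and of the conventions on which side $G$ acts. Once (i) is correctly established, every subsequent piece reduces either to naturality or to a direct verification of a commutative square.
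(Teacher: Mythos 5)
Your proposal is correct and follows essentially the same route as the paper: the paper likewise treats (i), (iii), (v) and (ix) as the substantive cocycle-level facts (citing Serre, I.\S5 for them) and disposes of (ii), (iv), (vi)--(viii) and (x) by exactly the kind of naturality/element-chasing checks you describe, including the observation that the discrepancy between $\eta\circ\sigma$ and $\sigma'\circ\theta$ is an $R'$-valued coboundary.
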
\label{restrict}
\begin{proof} \begin{enumerate}\item This is \cite[I.Prop. 38]{Ser94}.
\item is a straightforward check. For example, to see that  \[\xymatrix@!0{ 
S^G\ar@{>}[rrr]^{\delta_G} \ar@{>}[dd]^\theta & & & H^1(G,R)\ar@{>}[dd]^{\eta\circ}\\
\\
S'^G \ar@{>}[rrr]^{\delta'_G} & & & H^1(G,R').}\]
commutes: on elements we have
\[\xymatrix@!0{ 
qR\ar@{>}[rrrr] \ar@{>}[dd]& & & & &[q^{-\bullet}q]\ar@{>}[dd]\\
\\
\theta(qR)=\zeta(q)R' \ar@{>}[rrrrr] & & & & &[\zeta(q)^{-\bullet}\zeta(q)]=\zeta\left([q^{-\bullet}q]\right).}\]
 
\item is explained on \cite[p52]{Ser94}.
\item is clear from (i) and (iii).
\item is \cite[I.Prop. 35 bis]{Ser94}.
\item is easy to check from the definitions of the maps concerned. For instance, if $qR\in Q/R\cong S$ with $(qR)^g=qR$ for all $g$ then $(\text{res}^G_B\circ \delta_G)(qR)(b)=[q^{-\bullet}q](b)=q^{-b}q=\delta_B(qR)=\delta_B(\text{res}^G_B qR)$ for all $b\in B$.
\item is clear.
\item follows from (iv), (vi) and (vii).
\item is explained on \cite[p47]{Ser94}.
\item On elements 
\[\xymatrix@!0{ 
[\delta]\ar@{>}[rrrr] \ar@{>}[dd]& & & &[\delta\gamma]\ar@{>}[dd]\\
\\
[\delta|_B] \ar@{>}[rrrr] & & & &[\delta|_B\beta]=[\delta|_B\gamma|_B].}\]
\end{enumerate}
\end{proof}

Putting together the components of the last proposition:
\begin{prop}\label{cuboid}(i) With the hypotheses of Proposition \ref{list}, we have the following commutative partial cuboid
\[\xymatrix@!0{ 
& & H^1(G,R) \ar@{>}[rrrr]\ar@{>}'[dd][dddd]
& & & & H^1(G,Q) \ar@{>}'[dd][dddd] \ar@{>}[rrrr]
& & & & H^1(G,S) \ar@{>}[dddd]
\\ \\
H^1(G,R)\ar@{>}[rrrr]\ar@{>}[dddd]
& & & & H^1(G,Q_\gamma) \ar@{>}[uurr]\ar@{>}[dddd]\ar@{>}[rrrr] 
& & & & H^1(G,S_\gamma) \ar@{>}[uurr]\ar@{>}[dddd]
\\ \\
& & H^1(B,R) \ar@{>}'[rr][rrrr] 
& & & & H^1(B,Q) \ar@{>}'[rr][rrrr] 
& & & & H^1(B,S)
\\ \\
H^1(B,R) \ar@{>}[rrrr]
& & & & H^1(B,Q_\beta) \ar@{>}[rrrr]\ar@{>}[uurr] 
& & & & H^1(B,S_\beta) \ar@{>}[uurr]
},\]
where rightward arrows are part of four exact sequences running through the central vertical square.

(ii) With the hypotheses of Proposition \ref{list}(ii), we have the following commutative partial cuboid
\[\xymatrix@!0{ 
& & H^1(G,R) \ar@{>}[rrrr]\ar@{>}'[dd][dddd]
& & & & H^1(G,Q) \ar@{>}'[dd][dddd] \ar@{>}[rrrr]
& & & & H^1(G,S) \ar@{>}[dddd]
\\ \\
H^1(G,R)\ar@{>}[rrrr]\ar@{>}[dddd]
& & & & H^1(G,Q_\gamma) \ar@{>}[uurr]\ar@{>}[dddd]\ar@{>}[rrrr] 
& & & & H^1(G,S_\gamma) \ar@{>}[uurr]\ar@{>}[dddd]
\\ \\
& & H^1(G,R') \ar@{>}'[rr][rrrr] 
& & & & H^1(G,Q') \ar@{>}'[rr][rrrr] 
& & & & H^1(G,S')
\\ \\
H^1(G,R') \ar@{>}[rrrr]
& & & & H^1(G,Q_{\gamma'}') \ar@{>}[rrrr]\ar@{>}[uurr] 
& & & & H^1(G,S_{\gamma'}') \ar@{>}[uurr]
},\]
\end{prop}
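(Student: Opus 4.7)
The plan is to observe that the cuboid in (i) is naturally a $3 \times 2 \times 2$ grid---three columns indexed by $R, Q, S$; two rows indexed by $G$ and $B$ with restriction as the vertical arrow; and two layers indexed by untwisted and twisted, linked by the $\theta$-bijections of Proposition \ref{list}(v)---and that every $2$-dimensional square in this grid has already been shown to commute in the preceding proposition. Accordingly, I would verify commutativity face by face.

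In detail, the back and front faces of the cuboid are the two restriction diagrams from parts (vi) and (viii) respectively. The top face at $G$ splits into two squares: the square involving the two rightmost columns is precisely part (ix), while the square involving the two leftmost columns expresses the compatibility of $\theta_\gamma$ with the connecting map $H^1(G,R) \to H^1(G, Q_\gamma)$ and follows from the definitions recorded in parts (iv) and (v); the bottom face at $B$ is the same at the $B$-level. The central square at $Q$, connecting the four vertices $H^1(G, Q_\gamma), H^1(G, Q), H^1(B, Q_\beta), H^1(B, Q)$, is precisely part (x), and the analogous square at $S$ is part (x) applied to the induced cocycle in $Z^1(G, S)$. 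Finally, the leftmost face at $R$ is trivial, since $R \subseteq Z(Q)$ forces the twisted $G$-action on $R$ to coincide with the original; the front-to-back arrows in that column are identities, and restriction commutes with itself.

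Part (ii) proceeds along the same lines, with the restriction maps from $G$ to $B$ replaced by the functorial arrows induced by the $G$-equivariant homomorphisms $\zeta, \eta, \theta$ of Proposition \ref{list}(ii). The back and front faces are then furnished by part (ii) applied to the original short exact sequence and, via part (iii), to the twisted one; the left face reduces to naturality of $\zeta$; the central $Q$ and $S$ squares follow from the definition of $\theta_\gamma$ in part (v) together with the observation that $\eta$ and $\theta$ transport $\gamma$ to $\gamma' = \eta \circ \gamma$; and the top and bottom faces are handled exactly as before, combining parts (iv), (v), and (ix). I do not anticipate any real obstacle: the whole proof is organisational, amounting to matching each $2$-face of the cuboid to the appropriate clause of Proposition \ref{list}.
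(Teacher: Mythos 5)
Your overall approach matches the paper exactly: decompose the cuboid into two-dimensional faces and cite the relevant clauses of Proposition~\ref{list}. The back/front faces via (vi),(viii) (resp.~(ii),(iv) for the functorial version), the top/bottom twisting squares via (ix), and the central $Q$- and $S$-columns via (x) (resp.~the definition of $\theta_\gamma$) are all correctly identified.

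However, there is a genuine error in your treatment of the $R$-column. You claim the top face ``splits into two squares'' and that the square on the two leftmost columns---with an implicit identity arrow $H^1(G,R)\to H^1(G,R)$ from front to back---``follows from the definitions recorded in parts (iv) and (v),'' and likewise that ``the front-to-back arrows in that column are identities.'' But such a square would \emph{not} commute in general. Following $H^1(G,R)\to H^1(G,Q_\gamma)\xrightarrow{\theta_\gamma}H^1(G,Q)$ sends $[\alpha]\mapsto[\alpha]\mapsto[\alpha\gamma]$ (by the definition of $\theta_\gamma$ in (v)), whereas $H^1(G,R)\xrightarrow{\mathrm{id}}H^1(G,R)\to H^1(G,Q)$ sends $[\alpha]\mapsto[\alpha]$. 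These differ unless $\gamma$ is trivial. This is precisely why the statement is phrased as a \emph{partial} cuboid: the diagonal edge at the $R$-column is deliberately absent, and no claim is made about a face there. Your remaining reasoning is sound and suffices for the proposition as actually stated, but the extra claim you make is false and should be removed.
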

\begin{proof}For the first partial cuboid, the front and back faces are subdiagrams of Proposition \ref{list}(vi),(viii); the top and bottom faces commute by (ix); the vertical squares commute by (x).

For the second partial cuboid, the front and back faces commute by Proposition \ref{list}(ii) and (iv). The vertical squares commute by  (ix).
\end{proof}

\section{Main results}
\subsection{Versions of the five lemma}
\begin{theorem}\label{fivelemmaforh1}Assume the hypotheses of Proposition \ref{list} and let $B$ be a subgroup of $G$. Then in the following subdiagram of Proposition \ref{list}(vi), 
\begin{center}
$\begin{CD} S^G @>\delta_G>>  H^1(G,R)@>\iota_G>> H^1(G,Q)@>\pi_G>> H^1(G,S)@>\Delta_G>> H^2(G,R)\\
@V h_1VV @V h_2VV @V h_3VV @V h_4VV @V h_5VV \\
 S^B @>\delta_B>>  H^1(B,R)@>\iota_B>> H^1(B,Q)@>\pi_B>> H^1(B,S)@>\Delta_B>> H^2(B,R),
\end{CD}$\end{center}
 the following hold:
\begin{enumerate}\item If $h_2$ and $h_4$ are surjective and $h_5$ is injective, then $h_3$ is surjective. 
\item If $h_2$ and $h_4$ are injective and the restriction maps $S_\gamma^G\to S_\beta^B$ are surjective for any $\gamma\in Z^1(G,S)$ with $\gamma|^G_B=\beta$, then $h_3$ is injective.
\item If the hypotheses of (i) and (ii) hold, then $h_3$ is an isomorphism. \end{enumerate}\end{theorem}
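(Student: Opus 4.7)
The plan is to run a five-lemma-style diagram chase, with the usual complications that arise when working with pointed sets rather than groups on the right. The bridge across that gap is the twisting machinery of Proposition \ref{list}: a cocycle $\gamma\in Z^1(G,Q)$ with restriction $\beta$ to $B$ produces bijections $\theta_\gamma:H^1(G,Q_\gamma)\to H^1(G,Q)$ and $\theta_\beta:H^1(B,Q_\beta)\to H^1(B,Q)$ (part (v)) sending trivial class to $[\gamma]$ and $[\beta]$ respectively. Combined with exactness of the twisted sequences from part (iv), this identifies the fibre of $\pi_G$ over $\pi_G([\gamma])$ with the $\theta_\gamma$-image of the twisted map $\iota_G^\gamma:H^1(G,R)\to H^1(G,Q_\gamma)$, and similarly for $B$. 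Naturality between these twistings and $G\to B$ restriction is controlled by parts (vii)--(x) of Proposition \ref{list}, or more economically by the cuboid of Proposition \ref{cuboid}(i).

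For part (i), I would start with $[x]\in H^1(B,Q)$, push down to $H^1(B,S)$ via $\pi_B$, and lift through the surjective $h_4$ to some $[y]\in H^1(G,S)$. Exactness at $H^1(B,S)$ gives $\Delta_B h_4[y]=\ast$, so $h_5\Delta_G[y]=\ast$, and trivial kernel of $h_5$ forces $\Delta_G[y]=\ast$; then pull back via $\pi_G$ to some $[z]\in H^1(G,Q)$. At this stage $h_3[z]$ and $[x]$ lie in the same fibre of $\pi_B$ but need not coincide. Fixing a cocycle $\gamma$ representing $[z]$ with restriction $\beta$, the twisted-fibre identification writes $[x]=\theta_\beta(\iota_B^\beta[r])$ for some $[r]\in H^1(B,R)$. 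Surjectivity of $h_2$ lifts $[r]$ to $[r']\in H^1(G,R)$, and the naturality recorded in parts (viii) and (x) of Proposition \ref{list} (or the cuboid) shows $h_3(\theta_\gamma(\iota_G^\gamma[r']))=\theta_\beta(\iota_B^\beta[r])=[x]$.

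For part (ii), suppose $h_3[z_1]=h_3[z_2]$. Injectivity of $h_4$ yields $\pi_G[z_1]=\pi_G[z_2]$; pick a cocycle $\gamma_1$ representing $[z_1]$ with restriction $\beta_1$. The twisted-fibre identification then puts $\theta_{\gamma_1}^{-1}[z_2]$ in the image of $\iota_G^{\gamma_1}$, say $\iota_G^{\gamma_1}[r]$ with $[r]\in H^1(G,R)$. Restricting to $B$ and using $h_3[z_2]=h_3[z_1]$ forces $\iota_B^{\beta_1}h_2[r]=\ast$, so by exactness $h_2[r]=\delta_B^{\beta_1}(s)$ for some $s\in S_{\beta_1}^B$. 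Here the hypothesis on surjectivity of $S_{\gamma_1}^G\to S_{\beta_1}^B$ enters: lift $s$ to $s'\in S_{\gamma_1}^G$, obtain $h_2\delta_G^{\gamma_1}(s')=h_2[r]$, and use injectivity of $h_2$ to conclude $[r]=\delta_G^{\gamma_1}(s')$. Exactness of the twisted $G$-sequence at $H^1(G,R)$ then makes $\iota_G^{\gamma_1}[r]$ trivial, giving $[z_2]=[z_1]$. Part (iii) is an immediate combination of (i) and (ii).

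The principal obstacle is bookkeeping rather than conceptual: at every step one must track which sequence is twisted and by which cocycle, and then cite the precise naturality statement (untwisted-to-twisted or $G$-to-$B$) needed for the diagram to commute. The underlying repeating motif is that in a non-abelian exact sequence of pointed sets the assertion $\pi(a)=\pi(b)$ does not directly produce an element of a kernel; one must twist by a representative of $a$ to convert the problem into one about the distinguished class, where exactness finally speaks.
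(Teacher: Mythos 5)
Your proposal follows essentially the same route as the paper's proof: twist by a cocycle representing the class lifted via $\pi_G$ (for surjectivity) or one of the two classes being compared (for injectivity), then run the five-lemma chase in the twisted exact sequences, with the cuboid of Proposition~\ref{cuboid}(i) supplying the needed commutativity between the untwisted, twisted, $G$-level and $B$-level sequences. The ``twisted-fibre identification'' you invoke — that the fibre of $\pi_G$ over $\pi_G([\gamma])$ corresponds under $\theta_\gamma$ to $\ker\pi_G^\gamma=\operatorname{im}\iota_G^\gamma$ — is precisely the content encoded in the paper's cuboid argument, so your proof and the paper's are the same argument in slightly different notation.
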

\begin{proof} Assume the hypotheses of (i) and take $\gamma\in H^1(B,Q)$. We need to produce a pre-image $\beta$, say, such that $h_3(\beta)=\gamma$. We get started by the diagram chase used to prove the five lemma for diagrams of abelian groups.

Let $\delta:=\pi_B(\gamma)$. As the bottom row is exact, $\Delta_B(\delta)=1$. As $h_4$ is surjective, we have a pre-image $\epsilon$ with $h_4(\epsilon)=\delta$.

Since the last square commutes, $h_5(\Delta_G(\epsilon))=1$ and since $h_5$ is injective, $\Delta_G(\epsilon)=1$.

Now, the top row is exact, so $\epsilon\in\ker \Delta_G$ and hence $\epsilon\in\im\pi_G$; say, $\epsilon=\pi_G(\eta)$. Let $h_3(\eta)=\theta$.

Since the penultimate square commutes, we have $\pi_B(\theta)=\delta$.

The picture is now as follows:

\begin{center}
$\begin{CD}  * @>\delta_G >>  *@>\iota_G>> ?,\eta@>\pi_G>> *,\epsilon@>\Delta_G>> 1\\
@V h_1 VV @V h_2 VV @V h_3 VV @V h_4 VV @V h_5 VV \\
 *@>\delta_B>>  *@>\iota_B>>{\gamma},{\theta}@>\pi_B>> \delta@>\Delta_B>> 1\end{CD}$\end{center}
where we would like to assert the existence of an element replacing `$?$' and  mapping to $\gamma$.

In the case of abelian groups, one would continue the proof of the five lemma by taking the difference $\gamma-\theta$; observing that this maps under $\pi_B$ to $1$ and continuing the diagram chase. Since we cannot do this in the case of pointed sets we use twisting to reduce to the case $\theta=1$. For this we use  Proposition \ref{cuboid}(i), replacing the back face with the middle of the diagram above. 

\[\xymatrix@!0{ 
& & {*}\ar@{>}[rrrr]^{\iota_G}\ar@{>}'[dd][dddd]_{h_2}
& & & & ?,\eta \ar@{>}'[dd][dddd]_{h_3} \ar@{>}[rrrr]^{\pi_G}
& & & & \epsilon \ar@{>}[dddd]_>>>>>>>{h_4}
\\ \\
?_2 \ar@{>}[rrrr]^>>>>>>{\iota_G'}\ar@{>}[dddd]_>>>>>>>{h_2}
& & & & ?_3,1 \ar@{>}[uurr]^{\theta_\eta}\ar@{>}[dddd]_>>>>>>>{h_3'}\ar@{>}[rrrr]^>>>>>>{\pi_G'}
& & & & 1 \ar@{>}[uurr]^{\theta_\epsilon}\ar@{>}[dddd]_>>>>>>>{h_4'}
\\ \\
& & {*}\ar@{>}'[rr]^{\iota_B}[rrrr] 
& & & & \gamma,\theta \ar@{>}'[rr]^{\pi_B}[rrrr] 
& & & & \delta
\\ \\
?_1,1\ar@{>}[rrrr]^{\iota_B'}
& & & & \gamma',1 \ar@{>}[rrrr]^{\pi_B'}\ar@{>}[uurr]^{\theta_\theta} 
& & & & 1 \ar@{>}[uurr]^{\theta_\delta}
},\]

Here we are using the fact that the bijection in Proposition \ref{list}(v) takes the trivial element in $H^1(G,Q_\eta)$ to the element $\eta$ in $H^1(G,Q)$ and the fact that the partial cuboid is commutative.

Now in the front bottom row, as $\gamma'\in\ker\pi_B'$, we have $\gamma'\in\im\ \iota_B'$. Thus we may replace $?_1$ with an element $\kappa$ mapping to $\gamma'$. Since $h_2$ is a surjection, we may replace $?_2$ with an element $\lambda$ mapping to $\kappa$. Then we replace $?_3$ with $\mu=\iota_G'(\lambda)$ and by the fact that the front left square commutes,  $h_3'(\mu)=\gamma'$. Finally if we replace $?$ with $\nu:=\theta_\eta(\mu)$ the commutativity of the central vertical square gives us our preimage of $\gamma$.

For (ii) the picture in the partial cuboid is as follows:
\[\xymatrix@!0{ 
& &
& & & & {*}\ar@{>}[rrrr]^{\iota_G}\ar@{>}'[dd][dddd]_{h_2}
& & & & \gamma,\delta \ar@{>}'[dd][dddd]_{h_3} \ar@{>}[rrrr]^{\pi_G}
& & & & {\zeta} \ar@{>}[dddd]_>>>>>>>{h_4}
\\ \\
1,\nu \ar@{>}[rrrr]^{\delta_G'}\ar@{>}[dddd]_{h_1'}
& & & & 1,\kappa \ar@{>}[rrrr]^>>>>>>{\iota_G'}\ar@{>}[dddd]_>>>>>>>{h_2}
& & & & 1,\delta' \ar@{>}[uurr]^{\theta_\gamma}\ar@{>}[dddd]_>>>>>>>{h_3'}\ar@{>}[rrrr]^>>>>>>{\pi_G'}
& & & & 1 \ar@{>}[uurr]^{\theta_\zeta}\ar@{>}[dddd]_>>>>>>>{h_4'}
\\ \\
& &
& & & & {*}\ar@{>}'[rr]^{\iota_B}[rrrr] 
& & & & \epsilon \ar@{>}'[rr]^{\pi_B}[rrrr] 
& & & & \eta
\\ \\
1,\mu \ar@{>}[rrrr]^{\delta_B'}
& & & & 1,\lambda\ar@{>}[rrrr]^{\iota_B'}
& & & & 1 \ar@{>}[rrrr]^{\pi_B'}\ar@{>}[uurr]^{\theta_\epsilon} 
& & & & 1 \ar@{>}[uurr]^{\theta_\eta}
}\]
where we have twisted by $\gamma$. Here a preimage $\kappa$  of $\delta'$ under $i'_G$ exists since $\delta'$ is in the kernel of $\pi'_G$. Simlarly $\mu$ is a preimage of $\lambda$ under $\delta'_B$; and $\nu$ is a preimage of $\mu$ under $h_1'$ using the fact that that map is surjective.
 
 This shows that $\delta'=\iota'_G(\delta'_G(\nu))$ and hence is equal to $1$ since the composition of these two maps is trivial. Thus $\gamma=\delta$ since $\theta_\gamma$ is a bijection.
 
(iii) is now obvious.
\end{proof}

An identical proof using Proposition \ref{cuboid}(ii) shows:

\begin{theorem}\label{fivelemmaforh1funct}Assume the hypotheses of Proposition \ref{list}(ii). Then in the following subdiagram of Proposition \ref{list}(ii), 
\begin{center}
$\begin{CD} S^G @>\delta_G>>  H^1(G,R)@>\iota_G>> H^1(G,Q)@>\pi_G>> H^1(G,S)@>\Delta_G>> H^2(G,R)\\
@V h_1VV @V h_2VV @V h_3VV @V h_4VV @V h_5VV \\
 {S'}^G @>\delta_G>>  H^1(G,R')@>\iota_B>> H^1(G,Q')@>\pi_B>> H^1(G,S')@>\Delta_B>> H^2(G,R').\end{CD}$\end{center}
 the following hold:
\begin{enumerate}\item If $h_2$ and $h_4$ are surjective and $h_5$ is injective, then $h_3$ is surjective. 
\item If $h_2$ and $h_4$ are injective and the restriction maps $S_\gamma^G\to {S'}_{\gamma'}^G$ are surjective for any $\gamma\in Z^1(G,S)$ with $\gamma'=\eta\circ\gamma$, then $h_3$ is injective.
\item If the hypotheses of (i) and (ii) hold, then $h_3$ is an isomorphism. \end{enumerate}\end{theorem}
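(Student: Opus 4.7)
The plan is to follow the proof of Theorem \ref{fivelemmaforh1} verbatim, substituting the partial cuboid of Proposition \ref{cuboid}(ii) for that of Proposition \ref{cuboid}(i). The bottom face now records the $(R',Q',S')$-sequence rather than the $B$-restriction, and the role played by ``restriction'' is played by the functorial maps induced by $\zeta,\eta,\theta$; the slanted twisting bijections on the front face and the vertical functorial arrows commute as needed thanks to Proposition \ref{list}(iii), (iv).

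For (i), start with the standard five-lemma diagram chase: given $c \in H^1(G,Q')$, push to $\pi'_G(c)$, pull back through the surjection $h_4$ to some $e \in H^1(G,S)$, use injectivity of $h_5$ to force $\Delta_G(e) = 1$, and lift to $f \in H^1(G,Q)$ with $\pi_G(f) = e$. The class $h_3(f)$ then agrees with $c$ after projection to $H^1(G,S')$, but need not agree on the nose in the pointed set $H^1(G,Q')$. Now twist the back face of the cuboid by $f$ and the front face by $h_3(f)$; the pair $(c, h_3(f))$ translates to a pair $(c^{*}, 1)$ in the twisted front row, so $c^{*}$ lies in the kernel of the twisted $\pi'_G$, hence in the image of the twisted $\iota'_G$. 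Surjectivity of $h_2$ lifts that image to the twisted back row; applying the bijection $\theta_f$ produces the required preimage of $c$ under $h_3$.

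For (ii), suppose $h_3(\gamma) = h_3(\delta)$ in $H^1(G,Q')$. Twist the back face by $\gamma$ and the front by $\gamma' := \eta \circ \gamma$; then $\delta$ translates to $\delta^{*} \in H^1(G,Q_\gamma)$ with $h_3(\delta^{*}) = 1$ in $H^1(G,Q'_{\gamma'})$, and it suffices to prove $\delta^{*} = 1$. A chase in the twisted exact sequences locates preimages of $\delta^{*}$ back through $H^1(G,R)$ and then, using injectivity of $h_2$, into $S^G_\gamma$; the surjectivity hypothesis on $S^G_\gamma \to {S'}^G_{\gamma'}$ is exactly what is required to conclude that the image under the twisted $\delta_G$ is trivial. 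Hence $\delta^{*} = 1$ and $\gamma = \delta$. Part (iii) is then immediate from (i) and (ii).

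The main obstacle is bookkeeping rather than substance: one must verify that the ``front'' twisted row of the cuboid of Proposition \ref{cuboid}(ii) really is the sequence $1 \to R' \to Q'_{\gamma'} \to S'_{\gamma'} \to 1$ induced functorially from the twisted back sequence (this is exactly Proposition \ref{list}(iii)), and that the induced arrow $S^G_\gamma \to {S'}^G_{\gamma'}$ appearing in the chase is the one on whose surjectivity the hypothesis of (ii) is imposed. With that compatibility in hand the two diagram chases are formally identical to those displayed in the proof of Theorem \ref{fivelemmaforh1}, simply with $B$ replaced by $G$ and the vertical restriction arrows replaced by $\zeta \circ \_,\ \eta \circ \_,\ \theta \circ \_$.
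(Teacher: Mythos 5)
Your proposal is correct and takes exactly the approach the paper intends: the paper's entire proof of this theorem is the remark that the argument is identical to that of Theorem \ref{fivelemmaforh1}, simply running the two diagram chases inside the partial cuboid of Proposition \ref{cuboid}(ii) instead of \ref{cuboid}(i), with the vertical restriction arrows replaced by the functorial maps $\zeta\circ\_$, $\eta\circ\_$, $\theta\circ\_$. Your filled-in chases for (i) and (ii), including the observation that $R$ is unaffected by twisting and that the front twisted row is governed by Proposition \ref{list}(iii)--(iv), match the paper's reasoning.
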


\begin{prop}\label{fivelemmaforh0}Assume the hypotheses of \ref{list}(ii) and let $B$ be a subgroup of $G$. Then in each of the following diagrams:
\[\minCDarrowwidth19pt \begin{CD} 1@>>> R^G @>>> Q^G @>>> S^G @>\delta_G>>  H^1(G,R)\\
@V h_1 VV @V h_2 VV @Vh_3 VV @Vh_4 VV @Vh_5 VV \\
1@>>> R^B @>>> Q^B @>>> S^B @>\delta_B>>  H^1(B,R)\end{CD}\]
and
\[\minCDarrowwidth19pt \begin{CD} 1@>>> R^G @>>> Q^G @>>> S^G @>\delta_G>>  H^1(G,R)\\
@V h_1 VV @V h_2 VV @Vh_3 VV @Vh_4 VV @Vh_5 VV \\
1@>>> R'^G @>>> Q'^G @>>> S'^G @>\delta_G>>  H^1(G,R');\end{CD}\]
\begin{enumerate}\item If $h_2$ and $h_4$ are surjective and $h_5$ is injective, then $h_3$ is surjective.
\item If $h_2$ and $h_4$ are injective then $h_3$ is injective.
\item If $h_2$ and $h_4$ are isomorphisms  and $h_5$ is an injection then $h_3$ is an isomorphism.\end{enumerate}
\end{prop}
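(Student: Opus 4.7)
The key point I will exploit throughout is that because the image of $R$ lies in $Z(Q)$, the portion $1\to R^G\to Q^G\to S^G$ of each top row (and likewise of each bottom row) is an exact sequence of \emph{groups}, with $R^G$ central in $Q^G$. In contrast with Theorem \ref{fivelemmaforh1}, no twisting is needed: a plain diagram chase in the style of the ordinary five lemma suffices. I will treat both diagrams simultaneously, since the argument only uses commutativity, the exactness of the rows, and the five hypotheses on the $h_i$; the only difference between the two cases is whether the vertical maps are restrictions or induced by a $G$-equivariant morphism of short exact sequences.

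For (i), to show $h_3$ is surjective, take an element $q$ in the bottom-row group in position $3$ and construct a preimage. First push $q$ into the fourth column via $\pi$ (or $\pi'$); by surjectivity of $h_4$ there is $t$ in the top-row group in position $4$ lifting $\pi(q)$. In general $t$ need not come from position $3$ of the top row; the obstruction is $\delta_G(t)\in H^1(G,R)$. Commutativity of the rightmost square and the fact that $\pi(q)$ already lifts to $q$ give $h_5(\delta_G(t))=\delta_B(\pi(q))=1$ (with the obvious adjustment in the second diagram), and injectivity of $h_5$ then forces $\delta_G(t)=1$. Exactness of the top row produces $p$ in position $3$ of the top row with $\pi(p)=t$. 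Now $h_3(p)$ and $q$ have the same image in position $4$ of the bottom row, so $h_3(p)^{-1}q$ lies in the kernel of $\pi$ there, which by exactness equals the image of $R$ in position $3$; thus $h_3(p)^{-1}q=r$ for some $r$ in position $2$ of the bottom row. Surjectivity of $h_2$ lifts $r$ to $r_0$ in position $2$ of the top row, and since this position is central in position $3$, the product $p\cdot r_0$ still lies in position $3$ of the top row and satisfies $h_3(p\cdot r_0)=q$.

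For (ii), suppose $p_1,p_2$ in position $3$ of the top row satisfy $h_3(p_1)=h_3(p_2)$. Commutativity gives $h_4(\pi(p_1))=h_4(\pi(p_2))$, so injectivity of $h_4$ yields $\pi(p_1)=\pi(p_2)$. Hence $p_1p_2^{-1}$ is in the kernel of $\pi$, and by exactness of the top row equals the image of some $r$ in position $2$ of the top row. Applying $h_2$ gives $h_2(r)=h_3(p_1)h_3(p_2)^{-1}=1$, and injectivity of $h_2$ forces $r=1$, so $p_1=p_2$. Part (iii) is immediate from (i) and (ii).

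I do not foresee any serious obstacle: the statement is a genuine five-lemma, and because the extension is central, every term involved in the chase is an honest group rather than a pointed set. The one place not to gloss over is the appeal to centrality of $R$ when forming the product $p\cdot r_0$ in the argument for (i), which is what ensures the modified element stays inside the $G$-fixed points.
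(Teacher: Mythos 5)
Your proof is correct and is essentially the paper's own argument: the paper's proof simply notes that the usual five-lemma chase goes through once the ``difference'' $g-h$ is replaced by the product $gh^{-1}$, and you have written that chase out in full, correctly using only the group-level exactness of $1\to R^G\to Q^G\to S^G$ and the pointed-set exactness at $S^G$ together with injectivity of $h_5$. (One tiny over-caution: the product $p\cdot r_0$ lies in position $3$ of the top row simply because $Q^G$ is a subgroup of $Q$; centrality of $R$ is what guarantees the exact sequence of Proposition \ref{list}(i) exists at all, not what keeps the chase inside the fixed points.)
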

\begin{proof} The usual proof of the five lemma \cite[2.72]{Rot09} goes through in these cases. Where one would take the `difference' of two elements $g$ and $h$ in an abelian group  one uses the element $gh^{-1}$. The proof then works as normal.\end{proof}

\begin{corollary}\label{corGtoB}Suppose $Q$ is a $G$-group admitting a filtration by $G$-groups $Q=Q_1\geq Q_2 \geq Q_3 \geq \dots \geq Q_{n+1}=\{1\}$ with each $Q_i$ normalised by $G$, $Q_i\triangleleft Q$ and $Q_i/Q_{i+1}\leq Z(Q/Q_{i+1})$.

(a) If $B$ is a subgroup of $G$ and for all $1\leq j\leq n$ we have \begin{enumerate}\item $H^1(G,Q_j/Q_{j+1})\cong H^1(B,Q_j/Q_{j+1})$;
\item $H^0(G,Q_j/Q_{j+1})\twoheadrightarrow H^0(B,Q_j/Q_{j+1})$;
\item $H^2(G,Q_j/Q_{j+1})\hookrightarrow H^2(B,Q_j/Q_{j+1})$, \end{enumerate}
then $H^1(G,Q)\cong H^1(B,Q)$.

(b) Suppose $Q'$ is another $G$-group admitting another such filtration by $G$-groups $Q'=Q'_1\geq Q'_2 \geq Q'_3 \geq \dots \geq Q'_{n+1}=\{1\}$ and that there is a $G$-homomorphism $\rho:Q\to Q'$ such that $\rho(Q_i)\leq Q'_i$. If for all $1\leq j\leq n$ we have \begin{enumerate}\item $H^1(G,Q_j/Q_{j+1})\cong H^1(G,Q'_j/Q'_{j+1})$;
\item $H^0(G,Q_j/Q_{j+1})\twoheadrightarrow H^0(G,Q'_j/Q'_{j+1})$;
\item $H^2(G,Q_j/Q_{j+1})\hookrightarrow H^2(G,Q'_j/Q'_{j+1})$,\end{enumerate}then $H^1(G,Q)\cong H^1(G,Q')$.
\end{corollary}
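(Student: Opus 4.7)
The plan is to prove both (a) and (b) by induction on the length $n$ of the filtration, strengthening the inductive statement so that the twisted-surjectivity hypothesis required for injectivity in Theorem \ref{fivelemmaforh1}(ii) is itself delivered by the induction. Concretely, for part (a) I would carry through the pair of conclusions: for every $\gamma \in Z^1(G, Q)$ with restriction $\beta$ to $B$, one has both $Q_\gamma^G \twoheadrightarrow Q_\beta^B$ \emph{and} $H^1(G, Q) \cong H^1(B, Q)$. The base case $n=1$ is immediate, as $Q = Q_1/Q_2$ is abelian (forced by $Q_1/Q_2 \leq Z(Q/Q_2)$), so twisting is trivial and the two conclusions are exactly hypotheses (ii) and (i).

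For the inductive step I would apply everything to the central short exact sequence $1 \to Q_n \to Q \to Q/Q_n \to 1$, noting that $Q_n = Q_n/Q_{n+1}$ is abelian and lies in $Z(Q)$, so Proposition \ref{list} applies. The quotient $Q/Q_n$ inherits a filtration of length $n-1$ whose successive quotients are the same $Q_j/Q_{j+1}$ for $j \leq n-1$, so the inductive hypothesis is available on $Q/Q_n$. Surjectivity of $H^1(G,Q)\to H^1(B,Q)$ is then fed into Theorem \ref{fivelemmaforh1}(i) using hypothesis (i) at $j=n$ (for $h_2$), the $H^1$-part of the inductive hypothesis on $Q/Q_n$ (for $h_4$), and hypothesis (iii) at $j=n$ (for $h_5$). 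Injectivity follows from Theorem \ref{fivelemmaforh1}(ii), whose twisted surjectivity input $(Q/Q_n)_{\bar\gamma}^G \twoheadrightarrow (Q/Q_n)_{\bar\beta}^B$ is exactly the $H^0$-part of the inductive hypothesis on $Q/Q_n$, applied to the image $\bar\gamma$ of $\gamma$ in $Z^1(G,Q/Q_n)$. To propagate the $H^0$-part of the induction from $Q/Q_n$ back to $Q$, I would twist the central extension by $\gamma$ to obtain $1 \to Q_n \to Q_\gamma \to (Q/Q_n)_{\bar\gamma} \to 1$ and apply Proposition \ref{fivelemmaforh0}(i) with hypothesis (ii) ($h_2$ surjective), the inductive $H^0$-statement on $(Q/Q_n)_{\bar\gamma}$ ($h_4$ surjective), and hypothesis (i) ($h_5$ injective).

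The one subtlety, and the main obstacle, is justifying that the inductive hypothesis is legitimately available on the \emph{twisted} group $(Q/Q_n)_{\bar\gamma}$. This rests on the observation that each $Q_j/Q_{j+1}$ is central in $Q/Q_{j+1}$, so the conjugation action of $Q$ (and hence of $Q/Q_n$) on every filtration quotient is trivial; consequently the twisted action $q \ast g = \gamma(g)^{-1} q^g \gamma(g)$ collapses to the untwisted action on $Q_j/Q_{j+1}$ for any cocycle $\gamma$ valued in $Q$ or $Q/Q_n$. Thus the filtration of $(Q/Q_n)_{\bar\gamma}$ has the same successive quotients as $Q/Q_n$, and hypotheses (i)--(iii) transfer verbatim to the twisted situation. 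Part (b) is then proved by a mechanically identical induction, substituting Theorem \ref{fivelemmaforh1funct} for Theorem \ref{fivelemmaforh1} and using the second diagram of Proposition \ref{fivelemmaforh0} in place of the first at each step.
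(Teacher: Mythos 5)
Your proposal is correct and matches the paper's argument in substance: both run an induction on filtration length that first secures the $H^0$-surjectivity $Q_\gamma^G \twoheadrightarrow Q_\beta^B$ via Proposition~\ref{fivelemmaforh0}(i) applied to the twisted central extension, and then feeds that into Theorem~\ref{fivelemmaforh1}(iii) (and \ref{fivelemmaforh1funct}(iii) for (b)). The only presentational difference is that you bundle both conclusions into one strengthened inductive statement and make explicit the centrality observation that twisting acts trivially on the filtration quotients, which the paper leaves implicit.
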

\begin{proof} (a) This is a simple induction using the previous results. First, the fact that $Q_j$ is normalised by both $Q$ and $G$ mean that that $(Q_\gamma)_j:=(Q_j)_\gamma$ gives a filtration of $Q_\gamma$; we wish to show that the hypotheses (i) and (ii) give us: (*) For each $\gamma\in H^1(G,Q)$, we have $H^0(G,Q_\gamma/(Q_\gamma)_j)\twoheadrightarrow H^0(B,Q_\gamma/(Q_\gamma)_j)$.

Suppose (*) is known to hold for all $G$-groups with filtrations up to $n-1$, then we must just check that $Q^G\twoheadrightarrow Q^B$. So set $R=Q_n=(Q_\gamma)_n$, and $S=Q_\gamma/(Q_\gamma)_n$. The hypotheses of Proposition \ref{fivelemmaforh0}(i) now hold and we have $Q^G\twoheadrightarrow Q^B$ as required; proving (*). 

For the conclusion of (a) note that the case $n=1$ is immediate from the hypotheses. If (a) is known to hold for $G$-groups admitting a filtration of length up to $n-1$ then set $R=Q_n$, and $S=Q/Q_n$, then by (*) the hypotheses of  of Theorem \ref{fivelemmaforh1}(iii) hold. We conclude $H^1(G,Q)\cong H^1(B,Q)$.

(b) is similar, using \ref{fivelemmaforh1funct}(iii). 
\end{proof}

\begin{remark}\label{Bnotasub}The results in this section involving $B$ exist in a more general form which we do not need for the purposes of this paper. It is not necessary for $B$ to be a subgroup of $G$---merely that there is some homomorphism $\rho:B\to G$. Then, for instance, the `restriction maps' of $1$-cohomology become `composition with $\rho$'.\end{remark} 

\subsection{Algebraic $G$-groups}\label{algggrp}
Let $G$ be an algebraic group defined over an algebraically closed  field $k$ of characteristic $p\geq 0$. Let $Q$ be another algebraic group defined over $k$. Then we say $Q$ {\it is an algebraic $G$-group} if $Q$ is a $G$-variety (i.e. the map $G\times Q\to Q; (g,q)\mapsto q^g$ is a morphism of varieties), and $G$ acts as group automorphisms of $Q$. In this case, following \cite[8.4]{Hum75} one can form a semidirect product $GQ$ with multiplication defined by $g_1q_1.g_2q_2=g_1g_2q_1^{g_2}q_2$.

One checks that the results and proofs of the previous sections apply to the category of algebraic groups provided one is clear on the nature of this category: First, one insists that maps are morphisms of varieties, i.e. continuous in the Zariski topology. Thus by definition $Z^i(G,Q)$ is the set of {\it regular} maps $\gamma:\prod_1^iG\to Q$ satisfying the appropriate cocycle condition. This moreover ensures that $G$ acts regularly on $Q_\gamma$.\footnote{Of course one still has the original cohomology theory $H^i_\text{abs}$ defined via abstract cocycles, but we will not be considering this theory when $G$ is an algebraic group. }

With this change note that where we have made cocycle-coboundary definitions for $H^i(G,Q)$, these coincide with the usual rational (Hochschild) cohomology definitions of cohomology for algebraic groups in the case where $Q$ is a (rational) $G$-module, such as those used in \cite{CPSV77}. This is all worked out carefully in \cite[4.1]{McN10}. This means in particular that the results and proofs quoted in Proposition \ref{list} go over to our situation; this applies especially to results quoted from \cite{Ser94}.

Second, by a subgroup we will mean always a closed subgroup; i.e. the image of an injection of algebraic groups. For instance, since a cocycle $\gamma$ in $Z^1(G,Q)$ is a morphism $\gamma:G\to Q$, the subgroup $\{g\gamma(g):g\in G\}$ of the semidirect product $GQ$ is now a closed subvariety of $GQ$. By a complement to $Q$ in $GQ$, we use the definition in \cite[4.3.1]{McN10}:
\begin{defn}\label{compdef} Let $G=H\ltimes Q$ be a semidirect product of algebraic groups as in \cite[I.2.6]{Jan03}.

A closed subgroup $H'$ of $G$ is a \emph{complement} to $Q$ if it satisfies the following equivalent conditions:
\begin{enumerate}\item Multiplication is an isomorphism $H'\ltimes Q\to G$.
\item $\pi_{H'}:H'\to H$ is an isomorphism of algebraic groups
\item Viewed as group-schemes, $H'Q=G$ and $H'\cap Q=\{1\}$.
\item For the groups of $k$-points, one has $H'(k)Q(k)=G(k)$, $H'(k)\cap Q(k)=\{1\}$ and $\Lie(H')\cap \Lie(Q)=0$.\footnote{The definition and equivalence of (i) and (ii) above are \cite[4.3.1]{McN10}. (i) is equivalent to (iii) using the remark at the end of \cite[I.2.6]{Jan03}. For (iv), $H'(k)Q(k)=G(k)\iff HQ=G$ and by \cite[I.7.9(2)]{Jan03}, $\Lie(H'\cap Q)=\Lie(H')\cap\Lie(Q)$, so (iii) is equivalent to (iv).}\end{enumerate}
\end{defn}

One checks that one retains the correspondence between complements and $1$-cocycles in this category:
\begin{lemma}\label{comp}The set of $1$-cocycles $Z^1(H,Q)$ is in bijection with the set of complements to $Q$ in $HQ$. Two cocycles are equivalent if the corresponding complements are conjugate by an element of $H(k)$.\end{lemma}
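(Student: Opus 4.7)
The plan is to exhibit the bijection explicitly by $\gamma \mapsto H_\gamma := \{h\gamma(h) : h \in H\} \subseteq HQ$ and then trace the equivalence relation $\sim$ across it. First I would verify that $H_\gamma$ is a closed subgroup of $HQ$ and a complement in the sense of Definition~\ref{compdef}. Using the semidirect-product multiplication law,
\[
(g\gamma(g))(h\gamma(h)) = gh\cdot\gamma(g)^h\gamma(h) = gh\,\gamma(gh),
\]
so the cocycle identity forces closure under multiplication. Since $\gamma$ is by definition a morphism in the algebraic setting of \S\ref{algggrp}, the map $h\mapsto h\gamma(h)$ is a morphism of varieties, and composing with $\pi_H$ is the identity, so it is a closed immersion and $H_\gamma$ is a closed subgroup. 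The criteria in Definition~\ref{compdef} are then immediate: on $k$-points $H_\gamma(k)\cap Q(k) = \{1\}$ and $H_\gamma(k)\cdot Q(k) = HQ(k)$ by uniqueness of the $hq$-decomposition, while $\Lie(H_\gamma)\cap\Lie(Q) = 0$ because $\Lie(HQ) = \Lie(H)\oplus\Lie(Q)$ and $\Lie(\pi_H)$ restricts to an isomorphism on $\Lie(H_\gamma)$.

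For surjectivity, given a complement $H'$, Definition~\ref{compdef}(ii) furnishes an isomorphism $\pi_H\colon H'\to H$; let $\sigma\colon H\to H'\subseteq HQ$ be its inverse morphism. Writing $\sigma(h) = h\gamma(h)$ picks out a unique element $\gamma(h)\in Q$, defining $\gamma$ as a morphism $H \to Q$. The homomorphism property $\sigma(gh) = \sigma(g)\sigma(h)$ translates via the multiplication law above into the cocycle identity $\gamma(gh) = \gamma(g)^h\gamma(h)$. Injectivity of $\gamma\mapsto H_\gamma$ is clear from the uniqueness of the decomposition in $HQ$.

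Finally I would match up the equivalence relations. A direct computation inside $HQ$ gives
\[
q^{-1}(h\gamma(h))q = h\,\bigl(q^{-h}\gamma(h)q\bigr),
\]
so $Q(k)$-conjugation carries $H_\gamma$ to $H_\delta$ with $\delta(h) = q^{-h}\gamma(h)q$, recovering exactly the relation $\gamma\sim\delta$ from \S\ref{defs}. Conversely, conjugating $H_\gamma$ by $h'\in H(k)$ and rewriting with $g'' := h'gh'^{-1}$ produces the complement $H_{\gamma'}$ with $\gamma'(g) = \gamma(h'^{-1}gh')^{h'^{-1}}$. Expanding via the cocycle identity, $\gamma(h'^{-1}gh') = \gamma(h'^{-1})^{gh'}\gamma(g)^{h'}\gamma(h')$, and using $\gamma(h'^{-1}) = \gamma(h')^{-h'^{-1}}$ (itself a consequence of the cocycle condition applied to $h'h'^{-1} = e$) one obtains $\gamma'(g) = q_0^{-g}\gamma(g)q_0$ where $q_0 := \gamma(h')^{h'^{-1}}\in Q(k)$. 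Hence $H(k)$-conjugate complements yield cohomologous cocycles, which gives the statement in the form asserted, and since any element of $HQ(k)$ can be written $h'q$, both types of conjugation inside the full semidirect product induce the same relation on $Z^1(H,Q)$ as the cocycle equivalence.

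The step I expect to demand the most care is the last: non-abelian $H^1$ is only a pointed set, so the abelian ``five lemma'' style subtraction is unavailable, and the fact that $H(k)$-conjugation respects the cocycle equivalence is genuinely a computation—one must apply the cocycle identity both to $h'^{-1}\cdot gh'$ and to $g\cdot h'$ and then move the action $(-)^{h'^{-1}}$ through. The bijection itself is essentially formal once one is careful that morphisms of varieties are preserved at every stage, as the paper has already arranged in \S\ref{algggrp}.
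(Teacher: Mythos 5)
Your proof is correct and is precisely the standard graph-of-a-cocycle argument ($\gamma\mapsto H_\gamma=\{h\gamma(h)\}$, with the cocycle identity matching the semidirect-product multiplication law) that the paper itself does not write out but delegates to \cite[4.5]{McN10} with the remark that the proofs are the same as in the module case. Your explicit check that both $Q(k)$-conjugation and $H(k)$-conjugation of complements induce exactly the coboundary relation on cocycles covers the statement as written as well as the more usual $Q$-conjugacy formulation.
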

\begin{remark} This is a non-abelian generalisation from the case where $Q$ is an $H$-module. All the proofs are exactly the same. See \cite[4.5]{McN10}. \end{remark}

Lastly, where we consider extensions of algebraic $G$-groups we should emphasise the importance of the existence of the section $\sigma$ in Proposition \ref{list}.
\begin{defn} Let \[1\to R\to Q\stackrel{\pi}{\to} S\to 1\] be a short exact sequence of algebraic groups. We call $Q$ a {\it sectioned extension of $R$ by $S$} if there exists a regular map of varieties $\sigma:Q\to S$ with $\pi\circ\sigma$ the identity on $S$.\end{defn} Note that a sectioned extension is in particular a strongly exact sequence; that is to say that the map $d\pi:\Lie Q\to \Lie S$ is onto. If $Q$ is a sectioned extension of algebraic groups $R$ and $S$ with $R$ abelian, it follows that $Q$ corresponds to a (regular) $2$-cocycle class in $H^2(S,R)$: e.g. \cite[VII.4(a)]{Ser88}. (Obviously in the category of abstract groups, all extensions will be `sectioned', since they need simply to be defined set-wise.) We record the obvious fact:
\begin{lemma}\label{components}For any algebraic group $G$ and any $G$-group $Q$, the extension of $G$-groups \[1\to Q^\circ\to Q\to Q/Q^\circ\to 1\] is sectioned.\end{lemma}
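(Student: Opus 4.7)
The plan is to observe that the statement is essentially a consequence of $Q/Q^\circ$ being a finite (discrete) variety. Let me spell out what I would write.

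First I would recall the standard fact that the identity component $Q^\circ$ of any algebraic group $Q$ is a closed normal subgroup which is also open in $Q$, so that $Q$ is the disjoint union of its finitely many connected components, each of which is a coset of $Q^\circ$. Consequently the quotient $\pi \colon Q \to Q/Q^\circ$ realises $Q/Q^\circ$ as a finite group which, as a variety, is the disjoint union of finitely many reduced points; in particular every set-theoretic function from $Q/Q^\circ$ to any variety is automatically a morphism.

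Then I would simply construct $\sigma$ by choosing, for each connected component of $Q$, a $k$-point $q_i$ lying in that component (with $q_0 = 1$ the identity component), and defining $\sigma(q_i Q^\circ) = q_i$. This is well-defined set-theoretically, and by the previous paragraph it is a morphism of varieties. The equality $\pi \circ \sigma = \mathrm{id}_{Q/Q^\circ}$ holds by construction, so $\sigma$ is a regular section of $\pi$, establishing that the short exact sequence
\[1 \to Q^\circ \to Q \to Q/Q^\circ \to 1\]
is a sectioned extension in the sense of the preceding definition.

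There is no real obstacle: the content of the lemma is the banal observation that a morphism out of a finite discrete variety can be prescribed arbitrarily on points. The only thing one should be careful about is that $\sigma$ is required only as a map of varieties, not as a group homomorphism; no splitting of the group extension is claimed. This is exactly what is needed in order to invoke the $2$-cocycle interpretation of extensions mentioned just above the lemma.
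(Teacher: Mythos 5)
Your proof is correct, and it supplies exactly the standard justification for what the paper simply records as an ``obvious fact'' with no written proof: $Q^\circ$ is open of finite index, so $Q/Q^\circ$ is a finite discrete variety and any choice of a point in each component defines a regular (not necessarily homomorphic) section. Your closing remark that only a section of varieties, not a splitting of the group extension, is required is also the right reading of the paper's definition.
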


\begin{remark}Much of the cohomology theory for algebraic groups predicated on regular maps fails if one considers non-sectioned extensions of algebraic groups. We thank Steve Donkin for the following example. Let $R=\F_p$,  $Q=\G_a$ and $S=Q/\F_p$, where $\G_a$ is the 1-dimensional additive group. (Then $S$ is isomorphic to $\G_a$ too.) Imagine there were a morphism of varieties $\sigma:S\to Q$ with $\sigma$ a section of the quotient map $Q\to S$. Then $q+\F_p=\pi\sigma(q+\F_p)=\sigma(q+\F_p)+\F_p$. So $\sigma(q+\F_p)-q\in \F_p$. Since $S$ is connected, $F_p$ is discrete and $\sigma$ is continuous, the image of this map must be constant, hence $0$. So $\sigma(q+\F_p)$ is constantly $q$. If $0\neq r\in F_p$ then $\sigma(q+r+\F_p)=q=q+r$. So $r=0$; a contradiction.

Indeed the extension \[1\to R\to Q\to S\to 1\] cannot correspond to a $2$-cocycle $\phi:S\times S\to R$ since such regular cocycles from a continuous to a discrete group must again be trivial.

Worse, letting $G\cong S$, no such exact sequence of Proposition \ref{list}(i) can exist using rational cohomology: if it did, one would have
\[H^1(G,R)\to H^1(G,Q)\to H^1(G,S)\to H^2(G,R),\]
with $H^i(G,R)=0$ for $i=1,2$, as continuous maps $\gamma:G\to R$ or $\gamma:G\times G\to R$ must be constant (and so cocycles are zero maps). Thus we would have $H^1(G,Q)\cong H^1(G,S)$. The latter contains the identity map, but the above argument also shows there can be no preimage; a contradiction.\end{remark}

Happily, sectioned extensions exist in some general settings as in the lemma below. We need some further definitions:

\begin{defn} Let $G$ be a group and $Q$ a $G$-group. We say that a (finite) filtration \[Q=Q_1\geq Q_2\geq\dots \geq Q_{n+1}=1\] of $Q$ with $Q_i\triangleleft Q$ is {\it central} if $Q_i/Q_{i+1}\leq Z(Q/Q_{i+1})$;
we say that it is {\it a filtration by $G$-groups} if each $Q_i$ is normalised by $G$; we say it is {\it a filtration by $G$-modules} if it is a filtration by $G$-groups and for each $i$, $Q_i/Q_{i+1}$ has the structure of a (rational, linear) $G$-module; finally, we say that it is {\it sectioned} if for each $i$ there is a map $Q/Q_i\to Q$.\end{defn}

The following is a result of Rosenlicht.
\begin{lemma}[{c.f. \cite[14.2.6]{Spr98}}]\label{sectioned}Let $Q$ be a connected unipotent algebraic subgroup of an algebraic group $G$. Then \begin{enumerate}\item there exists a (regular) section to the morphism $G\to G/Q$ of varieties;
\item $G$ is isomorphic (as a variety) to $G/Q\times Q$.\end{enumerate}
In particular, any filtration of $Q$ by 
connected unipotent subgroups is sectioned.\end{lemma}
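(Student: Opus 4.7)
The plan is to reduce (i) to the case $Q \cong \G_a$ by induction on $\dim Q$. Since $k$ is algebraically closed, any connected unipotent algebraic group admits a central composition series by closed connected subgroups whose successive quotients are isomorphic to $\G_a$. Pick such a series for $Q$ and let $Q' \triangleleft Q$ be the penultimate term, so that $Q/Q' \cong \G_a$ and $\dim Q' < \dim Q$. Consider the factorisation $G \to G/Q' \to G/Q$. By the inductive hypothesis applied to the pair $(G,Q')$, the first map admits a regular section $s_1 : G/Q' \to G$; the second map is a principal $\G_a$-bundle over $G/Q$, so by the base case it admits a regular section $s_2 : G/Q \to G/Q'$. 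The composition $s := s_1 \circ s_2 : G/Q \to G$ is the desired section.

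For the base case $Q \cong \G_a$, the quotient morphism $G \to G/Q$ is a principal $\G_a$-bundle. The key classical input is Rosenlicht's theorem that such a bundle is Zariski-locally trivial and in fact admits a global regular section; concretely this reflects the vanishing of $H^1(-,\mathcal{O})$ on the relevant base, and is where the real work sits. We would quote it from \cite[14.2.6]{Spr98} rather than reprove it. Part (ii) then follows immediately: the morphism $\mu : G/Q \times Q \to G$, $(\bar x, q) \mapsto s(\bar x) \cdot q$, has regular inverse $g \mapsto (\pi(g), s(\pi(g))^{-1} g)$ where $\pi : G \to G/Q$ is the quotient, and so is an isomorphism of varieties.

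For the closing assertion, let $Q = Q_1 \geq Q_2 \geq \cdots \geq Q_{n+1} = 1$ be a filtration of $Q$ by closed connected unipotent subgroups. Apply (i) with the ambient algebraic group taken to be $Q$ itself and the connected unipotent subgroup taken to be $Q_i$: this produces a regular section $Q/Q_i \to Q$ of the quotient morphism for every $i$, which is exactly the defining condition for the filtration to be sectioned. The main obstacle throughout is thus concentrated in the base case of (i), namely the Rosenlicht splitting theorem for $\G_a$-torsors; the rest of the argument is a purely formal assembly.
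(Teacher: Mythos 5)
The paper offers no proof of this lemma at all: it is stated as a direct citation of Rosenlicht's cross-section theorem in the form given in \cite[14.2.6]{Spr98}, whose statement is verbatim parts (i) and (ii). Your proposal wraps an inductive reduction around that same citation, which is logically harmless but also somewhat circular: \cite[14.2.6]{Spr98} is already the full statement for arbitrary connected unipotent $Q$, so quoting it as the ``base case'' proves the theorem outright and the induction does no work. If instead you intend to import only the one-dimensional case (splitting of $\G_a$-torsors), then your inductive step has a real issue. The intermediate map $G/Q'\to G/Q$ is not of the form (group)$\to$(group mod $\G_a$-subgroup), so the base case as you formulated it does not literally apply; one must argue that it is a $Q/Q'\cong\G_a$-torsor (using $Q'\triangleleft Q$ so that $Q/Q'$ acts on $G/Q'$ on the right) and then, crucially, that this torsor over the homogeneous space $G/Q$ is globally trivial. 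Your suggested justification --- ``vanishing of $H^1(-,\mathcal{O})$ on the relevant base'' --- is not available: for $Q$ unipotent the quotient $G/Q$ is in general only quasi-affine (e.g.\ $G/R_u(B)$ for $G$ reductive), and quasi-affine varieties can have nonvanishing $H^1(\mathcal{O})$. The triviality of precisely these torsors is the substance of Rosenlicht's theorem and does not follow from soft cohomological vanishing; so your reduction does not actually isolate an easier base case.

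The remaining pieces are fine: deducing (ii) from (i) via $(\bar x,q)\mapsto s(\bar x)q$ with inverse $g\mapsto(\pi(g),s(\pi(g))^{-1}g)$ is correct, and the closing assertion follows by applying (i) inside $Q$ itself to each (connected) term $Q_i$ of the filtration, exactly as you say. My advice is to do what the paper does and cite \cite[14.2.6]{Spr98} for (i) and (ii) directly, keeping only your last paragraph; or, if you want to sketch the proof, be explicit that the key input is the triviality of $\G_a$-torsors over the quasi-affine homogeneous spaces arising here, and attribute that to Rosenlicht rather than to a cohomological vanishing that does not hold.
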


\subsection{Unipotent algebraic $G$-groups}
Let $G$ be a connected algebraic group. 
Here we prove that any connected unipotent algebraic $G$-group $Q$ admits a (sectioned) central filtration by $G$-modules. This allows us to use results like Corollary \ref{corGtoB}. First we need four technical lemmas.

\begin{lemma}\label{m0ormv}Let $G$ be an algebraic group defined over $k$ and let $M$ be a closed $G$-stable subgroup of a finite dimensional simple $G$-module $V$. Then $M=V$ or $M$ is finite.\end{lemma}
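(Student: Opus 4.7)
The plan is to reduce the statement to an easy fact about $G$-submodules of $V$ by passing to the Lie algebra of $M$. Since $V$ is a finite dimensional $G$-module, the underlying variety of $V$ is affine space, with group law given by addition, and $\Lie(V) \cong V$ (canonically, as $G$-modules). So I want to compare $M$ with its Lie algebra, sitting inside $V$.

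First I would observe that $M^\circ$ (the identity component) is $G$-stable: for any $g \in G$, the action of $g$ on $V$ is an automorphism of the algebraic group $V$ carrying $M$ to $M$, and hence the connected component of $1$ in $M$ to itself. Then $\Lie(M) = \Lie(M^\circ)$ is naturally a subspace of $\Lie(V) = V$, and the fact that $G$ acts on $M$ by (algebraic) group automorphisms means that differentiating yields a $G$-action on $\Lie(M)$ compatible with the inclusion $\Lie(M) \hookrightarrow \Lie(V)$; i.e.\ $\Lie(M)$ is a $G$-submodule of $V$. By simplicity of $V$, we have $\Lie(M) = 0$ or $\Lie(M) = V$.

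In the second case, $\dim M = \dim \Lie(M) = \dim V$, using the smoothness of the algebraic group $M$ (every algebraic group over an algebraically closed field, in the reduced-variety sense used throughout this paper, is smooth). Since $V$ is irreducible and $M^\circ \subseteq V$ is a closed irreducible subvariety of the same dimension, $M^\circ = V$, and so $M = V$. In the first case, $\dim M^\circ = \dim \Lie(M^\circ) = 0$, so $M^\circ = \{0\}$ and $M = M/M^\circ$ is finite.

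The only real subtlety is confirming that $\Lie(M)$ is $G$-stable as a subspace of $V$ (rather than merely stable under $\Lie(G)$), and checking that the identification $\Lie(V) = V$ identifies this $G$-action with the original one on $V$; both are standard but worth stating carefully. Everything else is a one-line consequence of the dimension formula $\dim M = \dim \Lie(M)$ for smooth algebraic groups and the irreducibility of $V$.
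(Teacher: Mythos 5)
Your proof is correct and is essentially the same as the paper's: both pass to the tangent space $T_0(M) = \Lie(M)$ at the identity, note it is a $G$-submodule of $T_0(V) \cong V$, and conclude from simplicity of $V$ and a dimension count. You are somewhat more careful than the paper in spelling out the $G$-stability of $\Lie(M)$ and the smoothness needed for $\dim M = \dim\Lie(M)$, but the argument is the same.
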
\begin{proof} Since $V$ is a $G$-module, there is a $G$-equivarient isomorphism between $V$ and $T_0(V)$, its tangent space at the identity. Under this map, $M$ is sent to a $G$-subspace $T_0(M)$ of $T_0(V)\cong V$, which is either $0$ or $V$ since $V$ is simple. If $T_0(M)=V$ then $M=V$ by dimensions; or if $T_0(M)=0$, then $M$ is $0$-dimensional, so finite.\end{proof}

Recall that a vector group $V$ is an algebraic group isomorphic to a direct product of $n$ copies of $\G_a$, the additive group of the field.

\begin{lemma}\label{vectgrp}Let $U$ be a connected unipotent algebraic group over a field $k$ of characteristic $p>0$, $V$ a vector group over $k$ and let $\pi:U\to V$ be an isogeny. Then $U$ is a vector group, hence isomorphic to $V$.
\end{lemma}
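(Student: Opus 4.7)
Let $K = \ker\pi$, a finite normal subgroup of $U$. The strategy is to promote the properties of $V$ through $\pi$ so that $U$ inherits commutativity and exponent $p$, and then to invoke the classical structural fact that a connected commutative unipotent algebraic group of exponent $p$ is a vector group.

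First I would show that $U$ is commutative. Since $V$ is abelian, $\pi([U,U]) \subseteq [V,V] = 1$, so $[U,U] \subseteq K$. But $[U,U]$ is a connected closed subgroup of $U$ (a standard fact for connected algebraic groups), and the only connected subgroup of the finite group $K$ is trivial; hence $U$ is commutative.

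Next I would show that $U$ has exponent $p$. In characteristic $p$ the vector group $V \cong \G_a^{\,n}$ satisfies $v^p = 1$ for every $v \in V$. Therefore $\pi(u^p) = \pi(u)^p = 1$, and $u^p \in K$ for every $u \in U$. Using the commutativity just established, the $p$-th power map $F_p : U \to U$, $u \mapsto u^p$, is a morphism of algebraic groups, so its image is a connected subset of the finite set $K$; this forces $F_p(U) = \{1\}$, i.e.\ $u^p = 1$ throughout $U$.

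Finally, I would appeal to the structural result that over an algebraically closed field of characteristic $p$, any connected commutative unipotent algebraic group of exponent $p$ is a vector group (see e.g.\ the treatment of commutative unipotent groups in \cite{Spr98}). Since $\pi$ is an isogeny we have $\dim U = \dim V = n$, and so $U \cong \G_a^{\,n} \cong V$.

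The principal obstacle is this last step: the structural characterization of vector groups among exponent-$p$ commutative unipotent groups is the substantive content. An elementary alternative would proceed by induction on $\dim U$---every one-dimensional connected unipotent group is already $\G_a$, so the real issue is splitting an extension $1 \to \G_a \to U \to \G_a^{\,n-1} \to 1$, equivalently the vanishing of $\mathrm{Ext}^1(\G_a,\G_a)$ in the category of $p$-torsion commutative unipotent groups (for example, the Witt extension giving $W_2$ has exponent $p^2$ and so does not live in this category). This reduction, however, is essentially just a repackaging of the structure theorem itself.
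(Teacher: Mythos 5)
Your proof is correct and follows essentially the same route as the paper's: observe that $u\mapsto u^p$ maps the connected group $U$ into the finite kernel of $\pi$ and is therefore trivial, then invoke the structure theorem for connected commutative unipotent groups killed by $p$ (the paper cites Serre's Proposition 11 where you cite Springer) and conclude by a dimension count. Your explicit verification that $U$ is commutative via $[U,U]\subseteq\ker\pi$ is a point the paper leaves implicit, and is a welcome addition since the structure theorem does require commutativity.
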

\begin{proof}If $u\in U$, $\pi(u^p)=p\pi(u)=0$, so the map $u\mapsto u^p$ is a continuous map from $U$ to the kernel of $\pi$. Such a map must be constant as $U$ is connected, thus identically $0$. Hence $u^p=0$. By \cite[Proposition 11]{Ser88}, $U$ is thus a vector group; dimensions imply that $U\cong V$.
\end{proof}

\begin{lemma}\label{absorbfinite}Let $G$ be a connected algebraic group and let \[1\to F\to W\stackrel{\pi}{\to} V\to 1\] be a short exact sequence of algebraic $G$-groups with $W$ a vector group, $V$ a simple $G$-module and $F$ a finite group. Then either $W$ has the structure of a trivial $G$-module or $F=0$. 

In any case, $W$ is isomorphic to $V$ as a $G$-module.\end{lemma}
\begin{proof}We have that $R_u(G)$ acts trivially on simple $G$-modules, so $V$ inherits the structure of a simple $H$-module for the reductive group $H=G/R_u(G)$. Take a maximal torus $T$ of $H$.

If $T$ acts trivally on $V$ then as $V$ is simple for $H$, we have $V\cong k$. Hence, by Lemma \ref{vectgrp}, $W\cong k$ also, with $G$ acting trivially and so $W$ has the structure of a trivial $G$-module.

Otherwise, let $v^+\in V$ be a highest weight vector for $H$ with non-trivial $T$-weight. Denote by $\langle Gv^+\rangle$ the closed subgroup of the (additive) group $V$ generated by elements $gv^+$. We claim $V=\langle Gv^+\rangle$. To see this, observe that $Tv^+=k^\times v^+$, and since $0\in \langle Gv^+\rangle$, we have $kv^+\leq \langle Gv^+\rangle$. Hence for each  $k_i\in k$ and $g_i\in G$, we have $k_ig_iv^+\in\langle Gv^+\rangle$. Then if $m=k_1g_1+\dots+k_lg_l\in kG$, we have $mv^+\in \langle Gv^+\rangle$. Thus $\langle Gv^+\rangle=kGv^+=V$. This proves the claim.

Now take $K=\left(\pi^{-1}(kv^+)\right)^\circ$ and let $0\neq w^+\in K$. We have $Tw^+$ is a connected subset of $\pi^{-1}(kv^+)$, so $\langle Tw^+\rangle=K$ and $K$ is a closed, connected $1$-dimensional subgroup of $W$, hence $K\cong k$, $K=Tw^+\cup \{0\}$ and $\pi(K)=kv^+$. The image of the map $\phi:G\times K\to W$ by $(g,x)\to gx$ is the image of an irreducible variety containing the identity of $W$. Hence by \cite[Prop.~7.5]{Hum75}, $\langle GK\rangle=\langle GTw^+\rangle=\langle Gw^+\rangle$ is a connected subgroup of $G$ whose elements all have the form $g_1w^++\dots+g_nw^+$. Moreover, $\pi\langle Gw^+\rangle=\langle Gv^+\rangle$, which is in turn equal to $V$ by the above claim, so we have that $\langle GK\rangle$ is a connected subgroup of $W$ isogenous to $V$. Thus $\langle Gw^+\rangle=W$ by dimensions and for any element $w\in W$ we can write $w=g_1w^++\dots+g_nw^+$. So let $f\in F\leq W$ and write $f=g_1w^++\dots+g_nw^+$. The map $\rho:K\to W$ by $x\mapsto g_1x+\dots+g_nx$ is the image of a connected $1$-dimensional variety, hence is $1$-dimensional or constant. Now, for any $\lambda\in k$, we have $g_1\lambda v^++\dots+g_n\lambda v^+=\lambda\pi(f)=0$, so that $\pi\rho(K)=0$. But since $\pi$ has only finite kernel, the map $\rho$ must have constant image. Since $0$ is in the image of $\rho$, it now follows that $f=0$.
\end{proof}

\begin{lemma}\label{nofinitebits}Let $Q$ be a connected unipotent algebraic $G$-group with a (central) filtration $\{Q_i\}$ such that successive quotients have the structure of simple $G$-modules or finite groups. Then $Q$ has a (central) filtration by $G$-modules.\end{lemma}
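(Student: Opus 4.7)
The plan is to induct on the length $n$ of the filtration, and within the inductive step, to ``push'' any troublesome finite quotient upward through the chain until the connectedness of $Q$ forces it to vanish.

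For the base case $n=1$, $Q$ itself is either a $G$-module (done) or a finite group (connected and finite forces trivial). For the inductive step with $n \geq 2$, I would apply the hypothesis to $Q/Q_n$, which is connected (as a quotient of a connected group), unipotent, and carries the induced central filtration $\{Q_i/Q_n\}$ of length $n-1$ with the same successive quotients. This produces a central $G$-module filtration of $Q/Q_n$, which I would lift to a central filtration $Q = P_1 \supseteq P_2 \supseteq \cdots \supseteq P_m = Q_n \supseteq 1$ of $Q$ whose quotients $P_i/P_{i+1}$ are all $G$-modules, except possibly the last one $Q_n$. If $Q_n$ is a $G$-module we are done, so the only remaining case is when $Q_n$ is finite.

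The heart of the proof is then a push-up step. Given any segment $P_{j-1} \supseteq P_j \supseteq P_{j+1}$ in which $P_{j-1}/P_j \cong V$ is a $G$-module and $P_j/P_{j+1} \cong F$ is finite, set $N := P_{j-1}/P_{j+1}$. Since $V$ is connected, the identity component $N^\circ$ surjects onto $V$ with finite kernel $N^\circ \cap F$, so Lemma \ref{vectgrp} gives $N^\circ \cong V$ as a vector group, and Lemma \ref{actionlinear} promotes the $G$-action on $N^\circ$---which is linear modulo the finite subgroup $N^\circ \cap F$---to a genuinely linear action. Thus $N^\circ$ is itself a $G$-module, and replacing $P_j$ by $P'_j$, the preimage of $N^\circ$ in $P_{j-1}$, yields a chain in which the finite factor has moved up one position: $P_{j-1}/P'_j \cong N/N^\circ$ is finite while $P'_j/P_{j+1} \cong N^\circ$ is a $G$-module. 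Iterating eventually brings the finite factor to the very top, where $Q/P'_2$ is finite, and since $Q$ is connected this forces $Q = P'_2$ and the step can be dropped.

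The main obstacle will be verifying that each push-up preserves centrality, since $P_j$ and $P'_j$ are generally not comparable. Both required conditions can be handled by the same observation: a morphism from a connected variety to a finite discrete set is constant. For $[P'_j, Q] \leq P_{j+1}$, I would apply this to the commutator morphism $N^\circ \times (Q/P_{j+1}) \to F$, whose source is connected since $Q$ is. For $[P_{j-1}, Q] \leq P'_j$, I would decompose $P_{j-1}$ into cosets of its identity component $(P_{j-1})^\circ$, handle the identity component by the same connectedness argument, and for each coset representative $p_\alpha$ apply the same principle to the morphism $Q \to Q/P'_j$, $q \mapsto [p_\alpha, q] P'_j$, which must be constant equal to the identity (its value at $q=1$). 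This gives centrality in the new filtration and allows the induction to close.
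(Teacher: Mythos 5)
Your proof is correct and follows essentially the same route as the paper's: locate a finite quotient sitting directly below a $G$-module quotient, pass to the identity component of the two-step subquotient, use Lemmas \ref{vectgrp} and \ref{actionlinear} to see that this component is a $G$-module, and push the finite factor up the filtration until the connectedness of $Q$ kills it at the top. The only differences are organizational (you induct on the length so the finite quotient sits at the bottom, whereas the paper takes the minimal index with finite quotient), and you additionally verify that centrality survives the push-up via the constancy of morphisms from connected varieties to finite sets---a point the paper's proof leaves implicit.
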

\begin{proof}

Let $m:=|\{i:Q_i/Q_{i+1}\text{ is finite}\}|$. Take $j$ minimum such that $A:=Q_j/Q_{j+1}$ is finite (note that $j>1$ or else $Q$ has a disconnected quotient). Then $C:=Q_{j-1}/Q_j$ is a $G$-module. Letting $B=Q_{j-1}/Q_{j+1}$, we have the following complex induced from the short exact sequence $1\to A\to B\to C\to 1$:
\[1\to A\cap B^\circ\stackrel{\iota}{\to} B^\circ\stackrel{\pi}{\to} C\to 1.\tag{*}\]
Clearly $\iota$ is an injection; the image of $B^\circ$ under $\pi$ is a connected subgroup of $C$ of the same dimension as $C$ hence is equal to $C$, showing that $\pi$ is surjective. If $b\in\ker \pi$ then we obviously have $b\in A\cap B^\circ$; thus the complex is a short exact sequence. 

Now, $B^\circ$ is a connected unipotent group isogenous to the vector group $C$. By Lemma \ref{vectgrp}, $B^\circ\cong C$ as a vector group. Thus we may apply Lemma \ref{absorbfinite} to the short exact sequence (*) to see that $B^\circ$ is a simple $G$-module. Thus $B$ has a filtration $1\to B^\circ \to B\to B/B^\circ\to 1$ with $B^\circ$ a $G$-module and $B/B^\circ$ finite. Let $\{Q'_i\}$ be the filtration of $Q$ with $Q'_i=Q_i$ for all $i\neq j, j-1$ and $Q_{j-1}=B/B^\circ$, $Q_j=B^\circ$. Then $\{Q'_i\}$ is a filtration of $Q$ by $G$-modules or finite groups with $j-1$ as the minimum value of $i$ such that $Q_{i}/Q_{i+1}$ is finite. By induction we get a filtration $\{R_i\}$ of $Q$ with $Q/R_2$ finite. This is the image of a connected group and is hence trivial. Thus $\{R_i\}_{i\geq 2}$ is a (central) filtration of $Q$ by $G$-modules and finite groups with $m-1$ successive quotients being finite.

Repeating this process, we get a central filtration $Q$ by $G$-modules with no finite quotients.\end{proof}

We are now in a position to prove our second main result.

\begin{theorem}\label{gmodfilt}Let $G$ be a connected algebraic group over an algebraically closed field $k$ and $Q$ a connected unipotent $G$-group. Then $Q$ has a (sectioned) central filtration by $G$-modules.\end{theorem}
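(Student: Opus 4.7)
The plan is to reduce to the abelian case using the descending central series of $Q$, and then to construct maps from an abelian piece $A$ into rational $G$-modules by exploiting the primitive elements of $k[A]$. Set $Q_1=Q$ and $Q_{i+1}=[Q,Q_i]$. Each $Q_i$ is a closed connected characteristic subgroup of $Q$, hence $G$-stable, and the chain terminates since $Q$ is nilpotent. The containment $[Q,Q_i]\subseteq Q_{i+1}$ makes the filtration central, and each $A_i:=Q_i/Q_{i+1}$ is a connected abelian unipotent $G$-group. Refining each $A_i$ by $G$-stable connected subgroups pulls back via $\pi_i\colon Q_i\to A_i$ to a chain of $G$-stable connected subgroups of $Q$ all containing $Q_{i+1}\supseteq [Q,Q_i]$; the pulled-back chain therefore remains central in $Q$. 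Thus it suffices to filter each $A=A_i$ by $G$-modules through connected intermediate subgroups.

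For the abelian case I would induct on $\dim A$. If $\dim A>0$, the standard composition series for connected unipotent algebraic groups gives a nontrivial morphism $A\to \G_a$, and pulling back the coordinate of $\G_a$ produces a nonzero primitive element of $k[A]$. Since $k[A]$ is a rational $G$-module on which $G$ acts by Hopf-algebra automorphisms, the subspace $\mathrm{Prim}(k[A])$ of primitives is a rational $G$-submodule, and I can choose a finite-dimensional nonzero $G$-submodule $M\subseteq \mathrm{Prim}(k[A])$. Evaluation
\[
\phi\colon A\to M^*,\qquad a\mapsto \bigl(f\mapsto f(a)\bigr),
\]
is a nonzero $G$-equivariant morphism of algebraic groups, and a group homomorphism because $M$ consists of primitives. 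Its image is a closed connected $G$-stable subgroup of the $G$-module $M^*$, hence itself a $G$-submodule. Put $A_1:=(\ker\phi)^\circ$, a closed connected $G$-stable subgroup with $\dim A_1<\dim A$. Then $A/A_1$ is a connected unipotent algebraic group isogenous, via the finite component group $\ker\phi/A_1$, to the vector group $\im(\phi)$. By Lemma \ref{vectgrp}, $A/A_1$ is itself a vector group; and its $G$-action agrees with the linear action on $\im(\phi)$ modulo this finite subgroup, so Lemma \ref{actionlinear} upgrades it to a linear $G$-action. Hence $A/A_1$ is a $G$-module, and applying the induction hypothesis to $A_1$ finishes the abelian case.

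Splicing the abelian refinements back through the descending central series yields a central filtration of $Q$ by $G$-modules whose intermediate subgroups are all closed, connected, and (as subgroups of $Q$) unipotent; Lemma \ref{sectioned} then delivers the sectioned property.

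The main obstacle is producing the $G$-equivariant morphism $\phi$ in the abelian step — the whole point is to convert the possibly nonlinear intrinsic $G$-action on $A$ into something linear. The space $\mathrm{Prim}(k[A])$ is the right device: it is automatically a rational $G$-module (since $k[A]$ is) and parametrises exactly those regular functions on $A$ that convert the group law on $A$ into addition in $k$, so that evaluation against them is automatically a group homomorphism from $A$ into a vector space on which $G$ acts linearly.
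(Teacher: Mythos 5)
Your global strategy (descending central series to reduce to the connected abelian case, then induct on dimension) is legitimate and genuinely different from the paper's, which instead embeds $GQ$ into $GL_n(k)$ with $Q$ unitriangular and filters $Q$ by its intersections with the groups $1+J^i$, $J$ the radical of the associative algebra generated by $Q$, cleaning up the finite quotients afterwards via Lemma \ref{nofinitebits}. However, your abelian step has a genuine gap, concentrated in two claims: that $\im(\phi)$ is a $G$-submodule of $M^*$, and that the $G$-action on $A/A_1$ ``agrees with the linear action on $\im(\phi)$ modulo a finite subgroup''. In characteristic $p$ a closed connected subgroup of a vector group need not be a linear subspace (e.g.\ $\{(t,t^p)\}\subseteq \G_a^2$), so the first claim fails. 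More seriously, even when $\im(\phi)$ is linear, the isogeny $A/A_1\to\im(\phi)$ may be purely inseparable, and then the linear structure on $\im(\phi)$ does not transport back through $\phi$: the hypothesis of Lemma \ref{actionlinear}, namely $g(\lambda v)=\lambda g(v)+f$ with $f$ in a fixed finite subgroup, is never actually verified for any vector-group structure on $A/A_1$. (In the paper's Lemma \ref{nofinitebits} the isogeny is the quotient by a finite \emph{\'etale} subgroup, which is why the transport works there.)

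This is not a removable technicality. Take $G=\G_a$ acting on $A=\G_a^2$ by $t\cdot(x,y)=(x,\,y+tx^p)$. The $G$-submodule of $\mathrm{Prim}(k[A])$ generated by the primitive $y$ is $M=\langle y,\,x^p\rangle$ --- a perfectly admissible choice of ``finite-dimensional nonzero $G$-submodule'' in your construction. Then $\phi(x_0,y_0)=(y_0,\,x_0^p)$ is a bijective (purely inseparable) homomorphism onto $M^*$, so $\ker\phi=1$, $A_1=1$, and your argument concludes that $A=A/A_1$ is a $G$-module. It is not: a direct computation with additive polynomials shows there is no isomorphism of algebraic groups $A\to k^2$ intertwining this action with a linear representation of $\G_a$; the correct filtration here has two steps, $1\subset\{(0,y)\}\subset A$, obtained from the \emph{other} admissible choice $M=\langle x\rangle$. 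So the argument as written yields a false conclusion for some admissible $M$, and nothing in your proof selects a good $M$ --- i.e.\ one for which $\phi$ is separable with linear image. Arranging that is precisely where the difficulty of the theorem lies, and it is what the paper's matrix-algebra filtration (together with Lemmas \ref{m0ormv}, \ref{vectgrp}, \ref{actionlinear} and \ref{nofinitebits}) is engineered to achieve.
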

\begin{proof}Form the semidirect product $H=GQ$ and embed it as a closed subgroup of $GL_n(k)$, in such a way that $Q$ is represented by strictly upper triangular matrices. Write $M_n(k)$ for the algebra of $n\times n$ matrices	 over $k$ and regard $M_n(k)$ as a $G$-module via matrix conjugation. Let $A$ be the subalgebra of $M_n(k)$ generated by $Q\leq GL_n(k)\subset M_n(k)$. Then every element in $Q$, and therefore $A$, is $\lambda.I + x_n$ for $x_n$ nilpotent; thus $A$ is a local algebra with radical $J$, say; in fact, $J$ is the ideal given by all the nilpotent elements of $A$. Now $G$ acts on $A$ by algebra automorphisms; thus $G$ also acts as a group of automorphisms of each power $J^i$ of $J$ since each is a characteristic ideal. Equally, $G$ acts on each of the multiplicative groups $1+J^i$, again by matrix conjugation. Note that for $j_1\in J^i$, and $j_2\in J$ we have $(1+j_1)(1+j_2)=(1+j_2)(1+j_1)$ modulo $J^{i+1}$ and so $(1+J^{i})/(1+J^{i+1})\leq Z((1+J)/(1+J^{i+1}))$. Moreover, there is a $G$-equivarient automorphism from the additive $G$-group $J^i/J^{i+1}$ to the multiplicative $G$-group $(1+J^i)/(1+J^{i+1})$. So $R=1+J$ has a central filtration by $G$-modules. We may refine this to a filtration $R=R_0>R_1>R_2>\dots>1$ with consecutive quotients being simple $G$-modules.

Now, since $Q$ was unipotent, $Q-1\subset A$ consists of nilpotent matrices and therefore contained in $J$. So $Q\subseteq R=1+J$. Thus, letting $Q_i=Q\cap R_i$, we have a filtration of $Q$ with successive quotients being either simple $G$-modules, or finite $G$-groups by \ref{m0ormv}. By Lemma \ref{nofinitebits}, there is now a central filtration of $Q$ by $G$-modules.

Recall that such a filtration is automatically sectioned by Lemma \ref{sectioned}.\end{proof}
\subsection{Non-abelian cohomology of a group extension}

The following is a generalisation (by a small amount) of the five-term exact sequence from the Lyndon-Hochschild-Serre spectral sequence to our non-abelian situation. Here our groups can be algebraic or abstract, bearing in mind the impositions made in \S\ref{algggrp}

\begin{lemma}\label{lhs} Let $1\to U\to G\stackrel{\pi}{\to} T{\to} 1$ be an extension of groups and let $Q$ be a $G$-group. Then there is an exact sequence
\[1\to H^1(T,Q^U)\stackrel{\rm inf}{\to} H^1(G,Q)\stackrel{\rm res}{\to} H^1(U,Q)^T.\]
Moreover, \begin{enumerate}\item if $H^1(T,Q_\gamma^U)=\{1\}$ for all $\gamma\in Z^1(G,Q)$, the map $H^1(G,Q)\stackrel{\rm res}{\to} H^1(U,Q)$ is injective;\\\item if $H^1(U,Q_\gamma)^T=\{1\}$ for all $\gamma\in Z^1(G,Q)$ then $H^1(T,Q^U)\stackrel{\rm inf}{\to} H^1(G,Q)$ is an isomorphism of pointed sets.\end{enumerate}\end{lemma}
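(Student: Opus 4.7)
The plan is to first establish the three-term exact sequence, and then to deduce (i) and (ii) using the twisting formalism of Proposition~\ref{list}.

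\textbf{Base sequence.} Define $\mathrm{inf}(\bar\gamma) := \bar\gamma\circ\pi$, which lands in $Q^U \subseteq Q$, and $\mathrm{res}$ by restriction to $U$. For injectivity of $\mathrm{inf}$: if $\bar\gamma\circ\pi = q^{-\bullet}q$ for some $q \in Q$, evaluating at $u \in U$ (where $\pi(u)=1$) forces $q = q^u$, i.e.\ $q \in Q^U$, so the identity already witnesses a coboundary in $Z^1(T,Q^U)$. The inclusion $\im\,\mathrm{inf} \subseteq \ker\mathrm{res}$ is immediate. Conversely, given $\gamma \in Z^1(G, Q)$ with $\gamma(u) = q^{-u}q$ on $U$, the cohomologous cocycle $\gamma'(g) := q^g\gamma(g)q^{-1}$ satisfies $\gamma'|_U = \mathbf{1}$. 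Using normality of $U$ (write $gu = u'g$ with $u' = gug^{-1} \in U$), the cocycle identity forces both $\gamma'(g)^u = \gamma'(g)$ (so $\gamma'(g) \in Q^U$) and $\gamma'(gu) = \gamma'(g)$ (so $\gamma'$ descends to a cocycle $T \to Q^U$), exhibiting $[\gamma]$ as inflated. Finally $\im\,\mathrm{res} \subseteq H^1(U, Q)^T$ by the standard conjugation check.

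\textbf{Part (i).} Suppose $[\gamma_1], [\gamma_2] \in H^1(G, Q)$ have equal restrictions. By Proposition~\ref{list}(v) and~(x), setting $[\delta] := \theta_{\gamma_1}^{-1}([\gamma_2]) \in H^1(G, Q_{\gamma_1})$ yields $[\delta|_U] = \theta_{\gamma_1|_U}^{-1}([\gamma_1|_U]) = [\mathbf{1}]$ in $H^1(U, Q_{\gamma_1})$. Applying the base exact sequence already established to the $G$-group $Q_{\gamma_1}$, the class $[\delta]$ is inflated from $H^1(T, Q_{\gamma_1}^U)$; by hypothesis this set is trivial, so $[\delta] = [\mathbf{1}]$ and thus $[\gamma_1] = [\gamma_2]$.

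\textbf{Part (ii).} Injectivity of $\mathrm{inf}$ is part of the base sequence, so the content is surjectivity. By exactness it suffices to show $[\eta|_U] = [\mathbf{1}]$ in $H^1(U, Q)$ for every $\eta \in Z^1(G, Q)$. Apply the base sequence to $Q_\eta$: the map $\mathrm{res}_\eta$ factors through $H^1(U, Q_\eta)^T = \{[\mathbf{1}]\}$ (hypothesis), so $\im\,\mathrm{res}_\eta = \{[\mathbf{1}]\}$. Proposition~\ref{list}(x) gives the commutative square
\[\begin{CD} H^1(G, Q_\eta) @>{\theta_\eta}>> H^1(G, Q) \\ @V{\mathrm{res}_\eta}VV @V{\mathrm{res}}VV \\ H^1(U, Q_\eta) @>{\theta_{\eta|_U}}>> H^1(U, Q) \end{CD}\]
with bijective horizontal maps. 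Hence $\im\,\mathrm{res} = \theta_{\eta|_U}(\im\,\mathrm{res}_\eta) = \{\theta_{\eta|_U}([\mathbf{1}])\} = \{[\eta|_U]\}$. But $[\mathbf{1}] = \mathrm{res}([\mathbf{1}]) \in \im\,\mathrm{res}$, forcing $[\eta|_U] = [\mathbf{1}]$.

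The main technical obstacle is the descent step in the base sequence at $H^1(G, Q)$, where one must carefully combine the cocycle identity with the normality of $U$ to show that the modified cocycle $\gamma'$ lands in $Q^U$ \emph{and} factors through $T$. Once the base sequence is in hand, both (i) and (ii) are clean applications of the twisting machinery of the preceding section, the key trick being that $\theta$ intertwines the restriction maps for $Q$ and $Q_\gamma$.
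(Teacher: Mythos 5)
Your proof is correct and takes essentially the same approach as the paper: establish the inflation--restriction exact sequence and then deduce (i) and (ii) by twisting by $\gamma$ and applying the base sequence to $Q_\gamma$, transferring information across the bijections $\theta_\gamma$. The only difference is that you prove the base exact sequence from scratch (a clean cocycle computation using normality of $U$), whereas the paper simply cites \cite[I.5.8(a)]{Ser94} and \cite[6.2.3]{Ric82}; for (i) the paper first replaces $\gamma$ by a cohomologous cocycle so that $\delta|_U=\gamma|_U$ on the nose, while you route the same reduction through the commuting square of Proposition~\ref{list}(x). One small point to tighten: ``injectivity of inf is part of the base sequence'' is slightly loose, since pointed-set exactness at $H^1(T,Q^U)$ gives only a trivial kernel; but the same coboundary computation you give (take $q$ with $\bar\gamma_1\circ\pi = q^{-\bullet}\,(\bar\gamma_2\circ\pi)\,q$, deduce $q\in Q^U$, and check $q^g$ depends only on $\pi(g)$) upgrades this to genuine injectivity, as in Serre.
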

\begin{proof} The existence of the exact sequence can be found in \cite[I.5.8(a)]{Ser94} and \cite[6.2.3]{Ric82}.

Assuming the hypotheses for (i), take $\delta$ and $\gamma$ in $Z^1(G,Q)$ with $\delta(u)=q^{-u}\gamma(u)q$ for each $u\in U$. We wish to show $\delta\sim\gamma$. First, replacing $\gamma$ by the cocycle $g\mapsto q^{-g}\gamma(g)q$ we have $\delta(u)=\gamma(u)$. Now consider the exact sequence
\[1\to H^1(T,Q_\gamma^U)=1\stackrel{\rm inf}{\to} H^1(G,Q_\gamma)\stackrel{\rm res}{\to} H^1(U,Q_\gamma)^T.\]
The image of $\delta$ under the map $\theta_\gamma^{-1}$ given in Proposition \ref{list}(v) is now in the kernel of res. Thus $\theta_\gamma^{-1}(\delta)=1$ by exactness of this sequence. So $\delta=\theta_\gamma(1)=\gamma$.

Statement (ii) is similar.\end{proof} 

\begin{question}If, moreover, $Q^U$ is abelian, can one extend this sequence to the right with the term \[\to H^2(T,Q^U)?\]\end{question}

The lemma has the following consequences for algebraic groups.

\begin{corollary}\label{cartanalg} Let $B=U\rtimes T$ be a connected solvable algebraic group with unipotent radical $U$ and maximal torus $T$. Then if $Q$ is a unipotent $B$-group, the restriction map $H^1(B,Q)\to H^1(U,Q)$ is an injection.

Let $G$ be a connected, reductive group with $G_s=G/Z(G)^\circ$ a semisimple quotient. If $Q$ is a unipotent $G$-group we have $H^1(G,Q)\cong H^1(G_s,Q^{Z(G)^\circ})$.  \end{corollary}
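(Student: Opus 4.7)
The plan is to apply Lemma \ref{lhs} to the two short exact sequences $1\to U\to B\to T\to 1$ and $1\to Z(G)^\circ\to G\to G_s\to 1$, in each case exploiting the fact that $H^1$ of a torus with coefficients in a unipotent algebraic group vanishes. The key intermediate claim, on which both parts rest, is the following: \emph{if $T$ is a torus and $M$ is any unipotent algebraic $T$-group, then $H^1(T,M)=\{1\}$.}

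To prove this claim, first suppose $M$ is connected. By Theorem \ref{gmodfilt}, $M$ admits a sectioned central filtration by rational $T$-modules. Since $T$ is linearly reductive, any rational $T$-module $V$ decomposes as a direct sum of characters, and Hochschild cohomology $H^n(T,V)$ vanishes for all $n\geq 1$. An induction on the length of the filtration, using the non-abelian long exact sequence of Proposition \ref{list}(i) at each sectioned stage, then yields $H^1(T,M)=\{1\}$. For general $M$, Lemma \ref{components} gives a sectioned extension $1\to M^\circ\to M\to M/M^\circ\to 1$ in which the quotient is finite. Since $T$ is connected, every regular morphism $T\to M/M^\circ$ is constant, so $Z^1(T,M/M^\circ)$ is forced to be trivial and $H^1(T,M/M^\circ)=\{1\}$. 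One more application of Proposition \ref{list}(i) combined with the connected case finishes the claim.

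For the first statement of the corollary, Lemma \ref{lhs}(i) applied to $1\to U\to B\to T\to 1$ reduces injectivity of $H^1(B,Q)\to H^1(U,Q)$ to the verification that $H^1(T,Q_\gamma^U)=\{1\}$ for every $\gamma\in Z^1(B,Q)$. Since $Q_\gamma$ coincides with $Q$ as an algebraic group it remains unipotent, and the fixed subgroup $Q_\gamma^U$ is a closed (possibly disconnected) subgroup, hence a unipotent algebraic $T$-group to which the claim above applies.

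For the second statement, the reductivity of $G$ ensures $Z(G)^\circ$ is a central torus, so for any $\gamma\in Z^1(G,Q)$ the twisted group $Q_\gamma$ is a unipotent $Z(G)^\circ$-group and the claim yields $H^1(Z(G)^\circ,Q_\gamma)=\{1\}$; a fortiori its $G_s$-fixed subset is trivial. Lemma \ref{lhs}(ii) then delivers the isomorphism $H^1(G_s,Q^{Z(G)^\circ})\cong H^1(G,Q)$. The principal technical point is the inductive step of the intermediate claim, where one must check that the sectioned hypothesis of Proposition \ref{list}(i) is indeed available at every stage; this is supplied by Lemmas \ref{sectioned} and \ref{components}.
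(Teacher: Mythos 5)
Your proof is correct and follows essentially the same route as the paper: both parts are reduced to Lemma \ref{lhs} via the vanishing of $H^1$ of a torus with coefficients in a (twisted) unipotent group, which you establish exactly as the paper sketches it (Theorem \ref{gmodfilt} plus linear reductivity of $T$ for the connected part, connectedness of $T$ for the component group); the paper merely adds that one could instead quote \cite[Lemma 6.2.6]{Ric82} for this vanishing.
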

\begin{proof} We have $H^1(T,Q^U)
=\{1\}$, for {\it any} unipotent group $Q$, by \cite[Lemma 6.2.6]{Ric82}, or an easy application of \ref{list}(i), \ref{gmodfilt} and the fact that we have $H^1(T,V)=0$ for all $T$-modules and the finite $T$-group $Q/Q^\circ$ (as $T$ is linearly reductive and connected, respectively). The hypotheses of Lemma \ref{lhs}(i) hold and the result follows.

For the second part, since $Z(G)^\circ$ is a torus, we also have $H^1(Z(G)^\circ,Q)=0$ for any unipotent algebraic group $Q$. Thus the statement follows from Lemma \ref{lhs}(ii). \end{proof}

\subsection{Application: $1$-cohomology of reductive $G$ with coefficients in a unipotent group}

In this section, $G$ will be a connected, reductive group defined over an algebraically closed field $k$ and $B$ a Borel subgroup of $G$. Recall that for any parabolic subgroup $P$ of $G$, there is a short exact sequence
\[1\to R_u(P)\to P\stackrel{\pi}\to L\to 1\]
where $R_u(P)$ is the unipotent radical of $P$, and $L$ is a complement to $R_u(P)$ in $P$, i.e. a Levi subgroup. We have a splitting homomorphism $\iota:L\to P$ such that $\pi\circ\iota$ is the identity map on $L$.

In \cite{CPSV77}, Cline, Parshall, Scott and van der Kallen proved a result which has the following generalisation

\begin{theorem}[{\cite[II.4.7]{Jan03}}]\label{cps}Let $G$ be a connected, reductive algebraic group over $k$ and let $P$ be a parabolic subgroup of $G$. If $V$ is a rational $G$-module, then $H^n(G,V)\cong H^n(P,V)$ for all $n\geq 0$.\end{theorem}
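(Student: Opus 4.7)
The plan is to deduce the result from a Grothendieck-style spectral sequence built out of the induction functor $\operatorname{ind}_P^G(-)$ together with Kempf vanishing applied to the structure sheaf of $G/P$. Concretely, the functor of $G$-invariants on rational $G$-modules factors as the composition of two left exact functors: restrict from $G$ to $P$-modules, take induction $\operatorname{ind}_P^G$ back up, then take $G$-invariants; equivalently, $V^P = (\operatorname{ind}_P^G V)^G$ for any rational $P$-module $V$. Since $\operatorname{ind}_P^G$ carries injectives to injectives (it has an exact left adjoint, restriction), one obtains a Grothendieck spectral sequence
\[
E_2^{p,q}=H^p(G,R^q\operatorname{ind}_P^G V)\ \Longrightarrow\ H^{p+q}(P,V).
\]

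The second step is to identify the right derived functors $R^q\operatorname{ind}_P^G V$ when $V$ is a $G$-module (not merely a $P$-module). Viewing $\operatorname{ind}_P^G V = H^0(G/P,\mathcal{L}(V))$ for $\mathcal{L}(V)$ the associated sheaf, the tensor identity applies because the $P$-action on $V$ extends to $G$: the sheaf $\mathcal{L}(V)$ is canonically isomorphic to $V\otimes_k\mathcal{O}_{G/P}$. Hence
\[
R^q\operatorname{ind}_P^G V\cong V\otimes_k H^q(G/P,\mathcal{O}_{G/P}),
\]
as rational $G$-modules, and everything reduces to the geometric computation of the cohomology of the structure sheaf.

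The third and main step is the vanishing $H^q(G/P,\mathcal{O}_{G/P})=0$ for $q>0$. I would reduce to the case $P=B$: given the projection $\pi\colon G/B\to G/P$ whose fibres are isomorphic to $P/B$, a rational projective variety, one has $\pi_*\mathcal{O}_{G/B}=\mathcal{O}_{G/P}$ and $R^q\pi_*\mathcal{O}_{G/B}=0$ for $q>0$, so the Leray spectral sequence collapses and identifies $H^q(G/P,\mathcal{O}_{G/P})$ with $H^q(G/B,\mathcal{O}_{G/B})$. The latter is then Kempf's vanishing theorem applied to the line bundle associated to the dominant weight $\lambda=0$: $H^q(G/B,\mathcal{L}(0))=0$ for $q>0$, with $H^0=k$.

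With this in hand the spectral sequence degenerates: $R^q\operatorname{ind}_P^G V=0$ for $q>0$ and $R^0\operatorname{ind}_P^G V=V$, so only the row $q=0$ survives, yielding $H^n(G,V)\cong H^n(P,V)$ for all $n\geq 0$, as required. The delicate part is really the third step — Kempf vanishing is a deep input, and its reduction from $G/B$ to $G/P$ must be set up so that $R^q\pi_*\mathcal{O}_{G/B}=0$ is genuinely available (this in turn comes from the cohomology of $P/B$ being concentrated in degree zero). Everything else is formal: the factorisation of $(-)^P$, the tensor identity for $G$-modules, and the collapse of the Grothendieck spectral sequence.
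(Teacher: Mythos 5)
The paper does not prove this statement --- it is quoted verbatim from Jantzen [II.4.7] --- and your argument is precisely the standard proof given in that reference: the Grothendieck spectral sequence $H^p(G,R^q\operatorname{ind}_P^G V)\Rightarrow H^{p+q}(P,V)$ arising from the factorisation $(-)^P=(-)^G\circ\operatorname{ind}_P^G$, the tensor identity for a module whose $P$-action extends to $G$, and Kempf vanishing giving $R^q\operatorname{ind}_P^G k=H^q(G/P,\mathcal{O}_{G/P})=0$ for $q>0$. The argument is correct (the only point left tacit is that the edge map realising the isomorphism is indeed restriction), so nothing further is needed.
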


Using the results of the previous sections we will generalise this result in the case $n=1$, by replacing $V$ by an arbitrary unipotent $G$-group $Q$. 

\begin{theorem}\label{alggrp} Let $G$ be a connected, reductive algebraic group over an algebraically closed field $k$ and let $Q$ be a unipotent $G$-group. Let $P$ be a parabolic subgroup of $G$. Then the restriction map $H^1(G,Q)\to H^1(P,Q)$ is an isomorphisms of pointed sets. \end{theorem}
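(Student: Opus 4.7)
The plan is to reduce Theorem \ref{alggrp} to a direct application of Corollary \ref{corGtoB}(a) with the subgroup $B$ there replaced by $P$, using the Cline--Parshall--Scott--van der Kallen isomorphism of Theorem \ref{cps} as the engine that verifies the cohomological hypotheses on successive quotients. All that is really required is a suitable filtration of $Q$ by $G$-stable normal subgroups with central successive quotients that are either rational $G$-modules or finite groups with trivial $G$-action.

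First I would build the filtration. Apply Theorem \ref{gmodfilt} to the connected unipotent $G$-group $Q^\circ$ to obtain a central filtration $Q^\circ = A_1 \geq A_2 \geq \dots \geq A_{m+1} = 1$ with each $A_i$ normal in $Q^\circ$, $G$-stable, and with $A_i/A_{i+1}$ a rational $G$-module. Because $G$ is connected and $Q/Q^\circ$ is a finite discrete algebraic group, the rational action of $G$ on $Q/Q^\circ$ is trivial, so every characteristic subgroup of $Q/Q^\circ$ is automatically $G$-stable. Pulling the upper central series of the finite $p$-group $Q/Q^\circ$ back to $Q$ produces a chain $Q = B_0 \geq B_1 \geq \dots \geq B_r = Q^\circ$ with each $B_i \triangleleft Q$, each $B_i$ $G$-stable, and each $B_i/B_{i+1}$ abelian and central in $Q/B_{i+1}$. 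The $A_j$ remain normal in $Q$ and $G$-stable because, in the construction of Theorem \ref{gmodfilt}, they arise as intersections of $Q^\circ$ with the powers of the Jacobson radical in the enveloping algebra of $Q$, which are characteristic in $Q$ and $G$-stable. Splicing the $B_i$ and $A_j$ gives a filtration of $Q$ of the shape demanded by Corollary \ref{corGtoB}(a).

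Next I would check hypotheses (i)--(iii) of Corollary \ref{corGtoB}(a) quotient-by-quotient. When a successive quotient $V$ is a rational $G$-module, Theorem \ref{cps} gives $H^n(G,V) \cong H^n(P,V)$ for every $n \geq 0$, so (i)--(iii) hold as isomorphisms. When a successive quotient $F$ is a finite abelian $p$-group coming from the upper central series of $Q/Q^\circ$, both $G$ and $P$ act trivially on $F$. Any regular map from the connected variety $G^{\times i}$ (resp.\ $P^{\times i}$) to the discrete set $F$ is constant, which forces $H^0(G,F) = F = H^0(P,F)$, $H^1(G,F) = \{1\} = H^1(P,F)$ (a constant $1$-cocycle with trivial action is a homomorphism, hence trivial), and $H^2(G,F) = 0 = H^2(P,F)$ (every constant $2$-cocycle is the coboundary of a constant $1$-cochain). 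Thus all three restriction maps are isomorphisms for each step, so Corollary \ref{corGtoB}(a) yields $H^1(G,Q) \cong H^1(P,Q)$.

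The main obstacle is bookkeeping at the boundary between the finite and connected parts of $Q$: one must certify that the spliced filtration retains normality in $Q$ and centrality in the appropriate quotients, and one must correctly interpret the low-degree rational cohomology of a connected algebraic group with coefficients in a finite discrete group (relative to the regular-map conventions of \S\ref{algggrp}). Once those routine verifications are in place, Theorem \ref{cps} handles every module quotient by itself, the finite quotients contribute nothing, and the inductive five-lemma machinery already packaged in Corollary \ref{corGtoB}(a) propagates the conclusion from the quotients to $Q$ itself.
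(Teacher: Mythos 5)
Your proposal is correct and rests on exactly the same three ingredients as the paper's proof: the filtration of Theorem \ref{gmodfilt}, the CPSvdK isomorphisms of Theorem \ref{cps} for the module quotients, and the inductive five-lemma machinery of Corollary \ref{corGtoB}(a). The one point of genuine divergence is your treatment of $Q/Q^\circ$. The paper dispatches the disconnected part in a single line by asserting $H^1(G,Q)\cong H^1(G,Q^\circ)$ since $G$ and $P$ are connected, and then applies \ref{corGtoB}(a) only to the filtration of $Q^\circ$. You instead extend the filtration through the finite top by pulling back the upper central series of $Q/Q^\circ$ and verify directly that each finite quotient $F$ contributes $H^0(G,F)=F=H^0(P,F)$, $H^1=\{1\}$, $H^2=0$ (because regular maps from a connected variety to a discrete group are constant), so that \ref{corGtoB}(a) applies to the full filtration of $Q$. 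This is slightly longer but arguably more self-contained, since it does not lean on the unproved (though standard) claim $H^1(G,Q)\cong H^1(G,Q^\circ)$; in effect you prove that reduction as part of the induction.

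One detail to tighten: after applying Theorem \ref{gmodfilt} to $Q^\circ$ you need the resulting terms $A_j$ to be normal in all of $Q$ \emph{and} central in $Q/A_{j+1}$, not merely in $Q^\circ/A_{j+1}$, for \ref{corGtoB}(a) to apply. Your parenthetical justification — that the $A_j$ arise as $Q^\circ\cap(1+J^j)$ for $J$ the radical of the algebra generated by $Q$ — is the right fix, but it is really running the construction in the proof of \ref{gmodfilt} on $Q$ itself (connectedness of $Q$ is only used there at the very end, to invoke Lemma \ref{nofinitebits}); applying \ref{gmodfilt} as a black box to $Q^\circ$ does not by itself give $Q$-centrality. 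Stating that you run the \ref{gmodfilt} construction on $Q$ and then splice would make this airtight. You should also note, as the paper's framework requires, that each step of the spliced filtration is sectioned: for the module quotients this is Lemma \ref{sectioned} (Rosenlicht), and for the finite-by-finite quotients at the top any set-theoretic section of a map between finite discrete groups is regular.
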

\begin{proof}We have $H^1(G,Q)\cong H^1(G,Q^\circ)$ and $H^1(P,Q)\cong H^1(P,Q^\circ)$ since $G$ and $P$ are connected. Also, by \ref{gmodfilt} the connected component $Q^\circ$ has a sectioned central filtration by (rational) $G$-modules. Now if $V$ is a $G$-module, $H^i(G,V)\cong H^i(P,V)$ by \ref{cps} for $0\leq i\leq 2$. We have the hypotheses for \ref{corGtoB} and so we conclude that $H^1(G,Q)\cong H^1(P,Q)$.
\end{proof}

\begin{remark}A more general result when $n=0$ is clear: for any affine $G$-variety $X$ we may choose a $G$-equivarient embedding of $X$ into a $G$-module $W$ with fixed points $X^G=X\cap W^G=X\cap W^B=X^B$.\end{remark}

To prove our second generalisation of results in \cite{CPSV77}, let us recall (and extend) the notion of a Frobenius twist---if $G$ is defined over a field $k$ of characteristic $p$ one has Frobenius morphisms $F_r=F_1^r:G\to G^{(r)}$ induced by the field automorphism $\sigma_q:k\to k$ by $x\mapsto x^{p^r}$ (see \cite[I.9.2]{Jan03}). Where $G$ is defined over the prime field $k_0\leq k$ we have $F_r(G)\cong G$. In this case, if $Q$ is any algebraic $G$-group (for instance a $G$-module) then one can define a new $G$-group $Q^{[r]}$ which is identical to $Q$ as a group, but where we twist the $G$-action by precomposing with $F_r$. If both $Q$ and the action map $G\times Q\to Q$ are also defined over $k_0$, one has a $G$-equivariant map of algebraic groups given by $Q\to Q^{[r]}$; call this the twist map. In the case $Q$ is a $G$-module, it is also $k$-semilinear. Denote the fixed points $G(k)^{F_r}$ of the group of $k$-points $G(k)$ by $G(q)$. If $G$ is connected reductive, $G(q)$ is called a finite reductive group.

The following is an easy improvement of the main result \cite[6.6]{CPSV77} to the case where $G$ is connected and reductive. We will only need it in the case that $m=1$.

\begin{lemma}\label{CPSV}Let $G$ be a connected reductive algebraic group defined and split over the prime field $k_0$ of $k$ with root system $\Phi$. Let $V$ be a rational $G$-module. Then there exist non-negative integers $e_0=e_0(\Phi,m,V)$ and $f_0=f_0(\Phi,m,V)$ so that for all $e\geq e_0$ and $f\geq f_0$, the restriction map $H^n(G,V^{[e]})\to H^n(G(p^{e+f}),V^{[e]})$ is an isomorphism for $n\leq m$ and an injection for $n=m+1$.\end{lemma}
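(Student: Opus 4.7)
The plan is to reduce to the semisimple case, which is exactly the original CPSV result \cite[6.6]{CPSV77} (or \cite[II.12]{Jan03}), by peeling off the central torus using the Lyndon--Hochschild--Serre spectral sequence both rationally and over finite groups. This mirrors the pattern used in Corollary \ref{cartanalg}.

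First I would set $Z:=Z(G)^\circ$, a torus, and $G_s:=G/Z$, a connected semisimple group, giving a short exact sequence $1\to Z\to G\to G_s\to 1$. For any rational $G$-module $V^{[e]}$, the higher rational cohomology $H^q(Z,V^{[e]})$ vanishes for $q>0$ because $Z$ is linearly reductive. Hence the LHS spectral sequence collapses to give
\[H^n(G,V^{[e]})\cong H^n\bigl(G_s,(V^{[e]})^Z\bigr)=H^n(G_s,W^{[e]}),\]
where $W:=V^Z$ viewed as a rational $G_s$-module (using that the fixed points $V^Z$ coincide with $(V^{[e]})^Z$ as vector spaces, and that the induced $G_s$-action pulls back to $G$ via the Frobenius twist).

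Next I would handle the finite groups. Since $Z$ is connected, Lang--Steinberg yields an exact sequence of finite groups
\[1\to Z(q)\to G(q)\to G_s(q)\to 1,\qquad q=p^{e+f}.\]
The order $|Z(q)|$ is coprime to $p=\operatorname{char}k$ (as $Z$ is a torus), so Maschke's theorem forces $H^q(Z(q),V^{[e]})=0$ for $q>0$, and the finite LHS spectral sequence collapses to
\[H^n(G(q),V^{[e]})\cong H^n\bigl(G_s(q),(V^{[e]})^{Z(q)}\bigr).\]
I then need the stabilisation $(V^{[e]})^{Z(q)}=V^Z=W$. Writing $V=\bigoplus V_\lambda$ as a sum of $Z$-weight spaces, the invariants $(V^{[e]})^{Z(q)}$ pick out those $\lambda$ with $p^e\lambda\in(q-1)X(Z)$; since $\gcd(p^e,q-1)=1$ this is the same as $\lambda\in(q-1)X(Z)$, which, as only finitely many $\lambda$ occur in $V$, forces $\lambda=0$ once $q-1$ exceeds the coordinates of any weight of $V$. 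Hence for $f$ sufficiently large (given $V$, independently of $e$), the equality holds and moreover the resulting $G_s(q)$-action on $W$ is obtained by restricting the action on $W^{[e]}$.

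Finally I would apply the CPSV theorem \cite[6.6]{CPSV77} to the semisimple group $G_s$ and its rational module $W$: this provides integers $e_0''=e_0''(\Phi,m,W)$ and $f_0''=f_0''(\Phi,m,W)$ so that, for $e\ge e_0''$ and $f\ge f_0''$, the restriction
\[H^n(G_s,W^{[e]})\longrightarrow H^n(G_s(p^{e+f}),W^{[e]})\]
is an isomorphism for $n\le m$ and an injection for $n=m+1$. Combining with the two LHS identifications above and taking $e_0,f_0$ to be the maxima of $e_0'',f_0''$ and the stabilisation bound from the weight argument, one obtains the required restriction iso/injection for $G$. The main nuisance is the bookkeeping with the Frobenius twist $[e]$: one has to check compatibility between the restriction $G\to G(q)$ in rational cohomology and the restriction $G_s\to G_s(q)$ after passing to $Z$-invariants, i.e.\ the naturality of LHS with respect to restriction to the finite subgroup. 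This follows because the quotient $G\to G_s$ is defined over the prime field and Lang--Steinberg identifies its $F_{e+f}$-fixed-point sequence with the finite quotient sequence, so all maps in sight are induced from the same commutative square.
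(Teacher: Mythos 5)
Your proof is correct and follows essentially the same route as the paper: reduce to the semisimple quotient $G_s = G/Z(G)^\circ$ via the LHS spectral sequence (rational and finite, the latter degenerating because $|Z(q)|$ is coprime to $p$), observe that $(V^{[e]})^{Z(q)}$ stabilises to $V^Z$ once $q$ dominates the $Z$-weights of $V$, identify $G(q)/Z(q)$ with $G_s(q)$ (the paper cites \cite[3.13]{DM91}, you invoke Lang--Steinberg directly, which amounts to the same thing), and then apply \cite[6.6]{CPSV77} to $G_s$ and $W = V^Z$. The only cosmetic difference is that you spell out the coprimality argument $\gcd(p^e,q-1)=1$ in the weight-space stabilisation step, which the paper leaves implicit.
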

\begin{proof}Let $Z=Z(G)^\circ$ be the connected centre of $G$. Set $G_s:=G/Z$, a semisimple quotient of $G$ and set $W=V^Z$, the $Z$-fixed points of $V$.

Invoking \cite[6.6]{CPSV77}, we find integers $e'_0=e'_0(\Phi,m,W)$ and $f'_0=f'_0(\Phi,m,W)$ so that for all $e\geq e'_0$ and $f\geq f'_0$, the restriction map $H^n(G_s,W^{[e]})\to H^n(G_s(p^{e+f}),W^{[e]})$ is an isomorphism for $n\leq m$ and an injection for $n=m+1$.

We claim there exists an $e_0\geq e'_0$ and $f_0\geq f'_0$ so that for all $e\geq e_0$ and $f\geq f_0$ we have (a) $H^n(G,V^{[e]})\cong H^n(G_s,W^{[e]})$; and (b) $H^n(G(p^{e+f}),V^{[e]})\cong H^n(G_s(p^{e+f}),W^{[e]})$ for all $n\geq 0$. Clearly this will finish the proof.

To prove the claim, take $e_0=e_0'$ and $f_0\geq f_0'$ so that each $Z$-weight of $V$ is $p^{f_0}$-restricted. Then for all $f\geq f_0$, and $q=p^{e+f}$ we have $(V^{[e]})^{Z(q)}=(V^{[e]})^Z$. We also have that $H^n(Z(q),V^{[e]})=0=H^n(Z,V^{[e]})$ for all $n\geq 1$, the former equality since $|Z(q)|$ is coprime to $p$, the latter as $Z$ is connected.

Now the Lyndon--Hochschild--Serre spectral sequence applied to $Z\triangleleft G$ degenerates to a line, yielding isomorphisms $H^n(G,V^{[e]})\cong H^n(G/Z,(V^{[e]})^Z)=H^n(G_s,W^{[e]})$ for all $n\geq 0$. This proves (a). For (b),  the Lyndon--Hochschild--Serre spectral sequence applied to $Z(q)\triangleleft G(q)$ similarly degenerates yielding isomorphisms $H^n(G(q),V^{[e]})\cong H^n(G(q)/Z(q),(V^{[e]})^{Z(q)})=H^n(G(q)/Z(q),W^{[e]})$ for all $n\geq 0$. Furthermore, as $Z$ is connected, we have by \cite[3.13]{DM91} that $G(q)/Z(q)\cong (G/Z)(q)=G_s(q)$. Thus $H^m(G(q),V^{[e]})\cong H^m(G_s(q),W^{[e]})$. This proves (b). The claim is proved and this finishes the proof of the lemma.\end{proof}

\begin{remark}\label{gmodfiltoverk0}In the proof of the next theorem, we will need to know that if $G$, $Q$ and the action map $G\times Q\to Q$ are defined over $k_0$ then the filtration of $Q$ asserted by Theorem \ref{gmodfilt} is also defined over $k_0$. We outline the embellishments to the proof. Note from \cite[2.3.7(ii)]{Spr98} that the embedding $GQ\hookrightarrow GL_n(k)=GL(V)$ can be taken to be defined over $k_0$; according to an appropriate $k_0$-basis of $V$ we can also insist $Q\hookrightarrow U_n(k)$. The remainder goes through fine: for instance $A=(k_0Q)\otimes_{k_0} k$, similarly for $J$ and all of its powers; hence also for the filtration $\{Q_i\}$. As $k_0$ is perfect, by \cite[12.2.1]{Spr98} quotients are defined over $k_0$; in particular the quotients $Q_i/Q_{i+1}$. Now the invocation of Lemma \ref{nofinitebits} under these hypotheses does yield another filtration over $k_0$: $B^\circ$ is defined over $k_0$ if $B$ is; also intersections of $k_0$-groups are defined over $k_0$ by \cite[12.1.15]{Spr98}. In conclusion, we see that $Q$ has a central filtration $\{Q_i\}$ with each $Q_i$ Frobenius stable, and $Q_i/Q_{i+1}$ is a $G$-module.\end{remark}

As in the case of a $G$-module, one always has an injection $H^1(G,Q)\hookrightarrow H^1(G,Q^{[1]})$, for instance, by looking at the cocycle-coboundary definitions. As in Lemma \ref{CPSV}, repeating this process quickly yields isomorphisms: 

\begin{theorem}\label{rationalstab} Let $G$ be a connected, reductive algebraic group defined over an algebraically closed field $k$ of characteristic $p>0$ and let $Q$ be a unipotent $G$-group with $G$, $Q$ and the action map $G\times Q\to Q$ defined over the prime field $k_0$ of $k$. Then there exists an integer $r_0$ such that for each $r\geq r_0$, the restriction map $H^1(G,Q^{[1]})\to H^1(G(p^r),Q^{[1]})$ is an isomorphism. Moreover this holds with $Q^{[1]}$ replaced by $Q$ unless $p=2$, $G$ has a component of type $C_l$ with $l\geq 1$ and $Q$ contains the $2l$-dimensional `natural representation' $V(\omega_1)$ as a $G$-composition factor.

In particular $H^1(G,Q^{[1]})\cong H^1(G,Q^{[e]})$ for $e\geq 1$. Moreover $H^1(G,Q)\cong H^1(G,Q^{[e]})$ for $e\geq 0$ unless $p=2$, $G/Z(G)^\circ$ contains  a factor of type $C_l$ and $Q$ contains the $2l$-dimensional $G$-composition factor $V(\omega_1)$.
\end{theorem}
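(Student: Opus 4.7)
The plan is to bootstrap the module-level stability of Lemma \ref{CPSV} to the unipotent setting by exploiting the central filtration of Theorem \ref{gmodfilt} together with the five-lemma corollaries of \S3. First I would apply Theorem \ref{gmodfilt} and its $k_0$-rationality refinement in Remark \ref{gmodfiltoverk0} to produce a central sectioned filtration $Q = Q_1 \geq Q_2 \geq \cdots \geq Q_{n+1} = 1$ each of whose terms is Frobenius-stable and whose successive quotients $V_j := Q_j/Q_{j+1}$ are rational $G$-modules defined over the prime field. Applying Frobenius term-by-term then produces a parallel filtration of $Q^{[e]}$ by the $V_j^{[e]}$, and the twist $Q^{[e]} \to Q^{[e+1]}$ is a $G$-equivariant morphism compatible with both filtrations.

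For the comparison $H^1(G, Q^{[1]}) \cong H^1(G(p^r), Q^{[1]})$ I would invoke Corollary \ref{corGtoB}(a)---using Remark \ref{Bnotasub} to allow the \emph{subgroup} $B = G(p^r)$ via its inclusion into $G$---whose hypotheses demand, for each $j$, surjection on $H^0$, isomorphism on $H^1$, and injection on $H^2$ for $V_j^{[1]}$. These are precisely what Lemma \ref{CPSV} with $m = 1$ provides, and I would set $r_0 := \max_j (e_0^j + f_0^j)$, where $(e_0^j, f_0^j)$ are the CPSV constants attached to $V_j$, choosing the twist exponent $e = 1 \geq e_0^j$ (which the CPSV framework permits in every case). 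The companion statement $H^1(G, Q^{[1]}) \cong H^1(G, Q^{[e]})$ for $e \geq 1$ then follows in the same way from the \emph{functorial} Corollary \ref{corGtoB}(b) applied to the twist $Q^{[1]} \to Q^{[e]}$, whose module-level hypotheses are again delivered by Lemma \ref{CPSV} (or, equivalently, by the generic-cohomology stabilization for rational $G$-modules, which in each degree is an injection that becomes an iso once $e$ passes the respective $e_0^j$).

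The main obstacle I anticipate is the `unless' clause. Dropping the initial twist---replacing $Q^{[1]}$ by $Q$---requires $e_0^j = 0$ for every composition factor $V_j$, i.e.\ the Frobenius map $V_j \to V_j^{[1]}$ must already be surjective on $H^1(G,-)$. By the CPSV analysis the unique obstruction to this is precisely the natural module $V(\omega_1)$ for a component of type $C_l$ in characteristic $2$; isolating and verifying this is the one anomaly is the piece of the proof I expect to require care, and I would lean on the existing module-level case analysis in \cite{CPSV77} rather than redoing it. In the absence of such a composition factor every $e_0^j$ vanishes, the argument above runs verbatim with $Q$ in place of $Q^{[1]}$, and the two unqualified `moreover' assertions follow; combining this with the already-noted injection $H^1(G, Q) \hookrightarrow H^1(G, Q^{[1]})$ closes out the statement.
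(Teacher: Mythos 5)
Your proposal follows the paper's own strategy almost exactly: filter $Q$ by Frobenius-stable $G$-modules via Theorem \ref{gmodfilt} and Remark \ref{gmodfiltoverk0}, feed the module-level isomorphisms/injections from Lemma \ref{CPSV} (with $m=1$) into Corollary \ref{corGtoB}(a) with $B=G(p^r)$ to get the restriction isomorphism, use the functorial Corollary \ref{corGtoB}(b) on the twist maps for the stability statements, and isolate the $(p=2,\,C_l,\,V(\omega_1))$ anomaly by quoting the CPSV case analysis. All of that is sound and is what the paper does.

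There is, however, one genuine gap: you never treat the case where $Q$ is disconnected. Theorem \ref{gmodfilt} produces the central filtration by $G$-modules only for \emph{connected} unipotent $G$-groups, whereas the theorem is stated for an arbitrary unipotent $G$-group $Q$. Passing from $Q$ to $Q^\circ$ is harmless on the left-hand side (since $G$ is connected, $H^1(G,Q)\cong H^1(G,Q^\circ)$), but it is not automatic for the finite group $G(p^r)$: one must show $H^1(G(q),Q^\circ)\to H^1(G(q),Q)$ is a bijection, and for this the paper proves $H^1(G(q),Q/Q^\circ)=\{1\}$ for $q$ large. That step needs an actual argument --- $Q/Q^\circ$ is a finite $p$-group on which $G$ (hence $G(q)$) acts trivially, so one filters it by copies of $\mathbb{F}_p$ and uses the injection $H^1(G(q),\mathbb{F}_p)\hookrightarrow H^1(G(q),k)=0$, the vanishing coming from Lemma \ref{CPSV} for $q$ sufficiently large --- followed by a twisting argument to upgrade the exact sequence of Proposition \ref{list}(i) from a statement about kernels to an honest bijection of pointed sets. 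Without this your argument only proves the theorem for connected $Q$. (A minor further remark: you do not need Remark \ref{Bnotasub} here, since $G(p^r)$ genuinely is a subgroup of $G$; and your reliance on ``$e=1\geq e_0^j$ in every case'' is exactly the content of the generalisation of \cite[7.1]{CPSV77} that the paper spells out, so it deserves the explicit citation rather than being folded into ``the CPSV framework permits''.)
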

\begin{proof}Assume first $Q$ is connected; so we may use Remark \ref{gmodfiltoverk0} to take a Frobenius stable central filtration $\{Q_j\}$ of $Q$ by $G$-modules. First assume we are not in the special case: that is where $p=2$, $G$ contains a factor of type $C_l$ and for some $j$, $Q_j/Q_{j+1}$ has a $G$-composition factor isomorphic to $V(\omega_1)$. We will show $H^1(G,Q)\cong H^1(G,Q^{[1]})$.

As in Lemma \ref{CPSV} one can generalise \cite[7.1]{CPSV77} to show that since we are not in the exceptional case, for each $1\leq j\leq n$, we have $H^1(G,Q_j/Q_{j+1})\cong H^1(G,(Q_j/Q_{j+1})^{[1]})$. One also has $H^2(G,Q_j/Q_{j+1})\hookrightarrow H^2(G,(Q_j/Q_{j+1})^{[1]})$, since the map $H^m(G,V)\to H^m(G,V^{[1]})$ is always an injection. Finally, it is clear that $H^0(G,Q_j/Q_{j+1})\cong H^0(G,(Q_j/Q_{j+1})^{[1]})$. We have the hypotheses for \ref{corGtoB}(b) and so we conclude that $H^1(G,Q)\cong H^1(G,Q^{[1]})$. 

Now, Lemma \ref{CPSV} asserts the existence, for each $G$-module $Q_j/Q_{j+1}$ an integer $r_j$ such that for $r\geq r_j$ restriction induces  isomorphisms $H^i(G,Q_j/Q_{j+1})\to H^i(G(p^r),Q_j/Q_{j+1})$ for $i=0,1$, and an injection $H^2(G,Q_j/Q_{j+1})\to H^2(G(p^r),Q_j/Q_{j+1})$. Take $r_0=\max_jr_j$. Then for any $r\geq r_0$,  take $B=G(p^r)$; we see the hypotheses of \ref{corGtoB}(a) hold and we conclude $H^1(G,Q)\cong H^1(G(p^r),Q)$.

Now we deal with the case where $Q$ is not connected; note again that $Q$ and $Q/Q^\circ$ are defined over $k_0$. We have $H^1(G,Q)\cong H^1(G,Q^\circ)$ as $G$ is connected. 

We claim there is also an isomorphism  $H^1(G(q),Q^\circ)\cong H^1(G(q),Q)$ for all sufficiently large $q$. Clearly this will now finish the proof in the non-exceptional case of the lemma. To prove the claim, from the short exact sequence \[1\to Q^\circ\to Q\to Q/Q^\circ\to 1\] we deduce that in the exact sequence of Proposition \ref{list}(i) the map $(Q/Q^\circ)^G=Q/Q^\circ\to H^1(G,Q^\circ)$ is trivial. We have shown that there exists an integer $r_0$ with $H^1(G,Q^\circ)\to H^1(G(q),Q^\circ)$ if $q=p^r$ and $r\geq r_0$. Thus using the commutative diagram of of \ref{list}(vi) with $G(q)$ taking the role of $B$, we see the map $(Q/Q^\circ)^G\to H^1(G,Q^\circ)$ is also trivial. We now wish to show $H^1(G(q),Q/Q^\circ)=\{1\}$. Since $G$ acts trivially on $Q/Q^\circ$, so does $G(q)$. As $Q/Q^\circ$ is a $p$-group, we can take a filtration of it by subgroups isomorphic to $\mathbb F_p$ under addition. Now $q$ is already big enough so that $H^1(G(q),k)=0$ 
using Lemma \ref{CPSV}. Since $H^1(G(q),\mathbb F_p)\to H^1(G(q),k)=0$ is clearly an injection, we have $H^1(G(q),Q/Q^\circ)=\{1\}$ by applying the long exact sequence of Proposition \ref{list}(i) inductively through this filtration of $Q/Q^\circ$. One last application of Proposition \ref{list}(i) applied to $1\to Q^\circ\to Q\to Q/Q^\circ\to 1$ yields the exact sequence $1\to H^1(G(q),Q^\circ)\to H^1(G(q),Q)\to 1$. An easy twisting argument now shows that $H^1(G(q),Q^\circ)\to H^1(G(q),Q)$ is an isomorphism of pointed sets, thereby proving the claim.

In the exceptional case 
of the theorem when $p=2$, exactly the same proof shows that $H^1(G,Q^{[1]})\cong H^1(G,Q^{[2]})$, using the exceptional case from \cite[7.1]{CPSV77}.
\end{proof}
\begin{defn} Following \cite[6.7(b)]{CPSV77}, we denote by $H^1_\text{gen}(G,Q)$ the stable value of $H^1(G(p^r),Q)$.\end{defn}
\begin{remark}\label{explicit}If one knows the weights of $G$ on a filtration of $Q$, one can establish an explicit value of $r_0$ in the theorem above using the proof of \ref{CPSV} and the explicit constants given for $e_0$ and $f_0$ in \cite{CPSV77}.\end{remark}

\subsection{Subgroup Structure}

Finally, we give a corollary on subgroup structure. For that we need the following lemma, which pins down when abstract complements in semidirect products of connected, reductive groups with unipotent groups are complements also as algebraic groups.

\begin{lemma}[cf. {\cite[1.5]{LS96}}]\label{cnbn}Let $G$ be a connected, reductive algebraic group over an algebraically closed field $k$ of characteristic $p$ 
and $Q$ a unipotent $G$-group. Suppose $H$ is a closed, connected, reductive subgroup of the semidirect product $GQ$ and denote by $\bar H$ the subgroup of $G$ given by the image of $H$ under the quotient map $\pi:GQ\to G$. Assume $\bar H=G$.

Then as abstract groups $H(k)$ is a complement to $Q(k)$ in $G(k)Q(k)$; and either (a) $H$ is a complement to $Q$ in $GQ$; or (b) \begin{enumerate}\item $p=2$;
\item $SO_{2n+1}$ is a component of the semisimple group $H/Z(H)^\circ$;
\item the image of this component in $G/Z(G)^\circ$ is isomorphic to $Sp_{2n}$; and
\item the natural module for $Sp_{2n}$ appears in a filtration of $Q$ by $G$-modules.\end{enumerate} In case (b), if $G$ and $Q$ and the action map $G\times Q\to Q$ are defined over the prime field $k_0$ of $k$, then $H$ corresponds to a cocycle $\gamma\in Z^1(G,Q^{[1]})$ such that $[\gamma]$ has no preimage in $H^1(G,Q)$ under the inclusion $H^1(G,Q)\to H^1(G,Q^{[1]})$.

Thus there is a bijection between the set of conjugacy classes of closed, connected, reductive  subgroups $H$ of $GQ$ with $\bar H=G$ and the set $H^1(G,Q^{[1]})$.
\end{lemma}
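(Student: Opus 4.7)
The plan is to proceed in three stages: first to prove the abstract-complement claim by a Jordan-decomposition argument, then to identify the infinitesimal intersection $\Lie H \cap \Lie Q$ as the sole obstruction to $H$ being an algebraic complement and analyse it using the Borel--Tits theory of very special isogenies (following \cite[1.5]{LS96}), and finally to produce the cocycle in $H^1(G,Q^{[1]})$ in case (b) by a Frobenius-twisting construction and assemble the bijection.

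For the abstract complement, set $N := H \cap Q$, a closed normal subgroup scheme of $H$ contained in the unipotent group $Q$. The identity component $N^\circ$ is a connected normal unipotent subgroup of the connected reductive $H$ and is therefore trivial, so $N$ is finite as a group scheme. The finite subgroup $N(k)$ is normal in the connected group $H$, hence central; but $Z(H) \subseteq T$ for every maximal torus $T$, so $N(k)$ consists of semisimple elements, while $N(k)\subseteq Q(k)$ is unipotent, forcing $N(k) = 1$. Combined with surjectivity of $\pi|_H$ on $k$-points (since $\bar H = G$ and $k$ is algebraically closed), this shows $H(k)$ is an abstract complement to $Q(k)$.

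For the dichotomy, Definition \ref{compdef}(iv) says $H$ is an algebraic complement iff $\Lie H \cap \Lie Q = 0$. If instead $\Lie H \cap \Lie Q \neq 0$, then $\pi|_H : H \to G$ is an inseparable isogeny bijective on $k$-points, and this infinitesimal kernel is an $H$-invariant subspace sitting inside the filtration of $\Lie Q$ induced by Theorem \ref{gmodfilt}. The argument of \cite[1.5]{LS96}---whose proof is formulated for parabolic subgroups but applies verbatim once the $G$-module filtration of $Q$ is in hand---combined with the classification of very special isogenies between connected reductive groups in characteristic $p$, then forces $p = 2$, an $SO_{2n+1}$-component of $H/Z(H)^\circ$ mapping by a very special isogeny onto an $Sp_{2n}$-component of $G/Z(G)^\circ$, and the appearance of the natural $Sp_{2n}$-module $V(\omega_1)$ as a composition factor of the filtration of $Q$.

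For the cocycle in case (b), assuming $G,Q$ and the action are defined over $k_0$, the very special isogeny $\sigma:SO_{2n+1}\to Sp_{2n}$ has a dual $\sigma': Sp_{2n}\to SO_{2n+1}$ with $\sigma\circ \sigma'=F$, the Frobenius on $Sp_{2n}$. Pre-composing the inclusion $H \hookrightarrow GQ$ with $\sigma'$ (extended by the identity on the remaining factors) yields a morphism $G \to GQ$ whose projection to $G$ is Frobenius; translating this into the Frobenius-twisted semidirect product $G\ltimes Q^{[1]}$ via Lemma \ref{comp} produces an honest complement, and hence a class $[\gamma] \in H^1(G,Q^{[1]})$. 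This class cannot lie in the image of $H^1(G,Q) \to H^1(G,Q^{[1]})$, since a preimage would give an honest separable complement to $Q$ in $GQ$ by Lemma \ref{comp}, contradicting the inseparability of $\pi|_H$. The bijection with $H^1(G,Q^{[1]})$ then follows: case-(a) classes correspond to $H^1(G,Q)\hookrightarrow H^1(G,Q^{[1]})$ via Lemma \ref{comp}, and the construction above (applied in reverse, using $\sigma$) produces case-(b) subgroups from cocycle classes outside that image. The main obstacle is the dichotomy step, where one must carefully transfer the Liebeck--Seitz very-special-isogeny argument from the parabolic setting to the semidirect-product setting $GQ$; the $G$-module filtration of $Q$ from Theorem \ref{gmodfilt} is the key enabling tool.
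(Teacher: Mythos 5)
Your proposal follows the same strategy as the paper's proof: the abstract-complement claim via a Jordan-decomposition argument, the dichotomy via the infinitesimal obstruction $\Lie H\cap\Lie Q$ and the classification of (very) special isogenies in the style of \cite[1.5]{LS96}, and the cocycle in $Z^1(G,Q^{[1]})$ via the dual special isogeny whose composite with $\pi$ is Frobenius. However, two steps are too compressed and would need repair.

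First, the paper proves the statement for $H$ (hence $G$) semisimple and only at the end reduces the general reductive case by conjugating so that $Z(H)^\circ=Z(G)^\circ=:Z$, passing to $(G/Z)\ltimes Q^{Z}$, and invoking Lemma \ref{lhs}/Corollary \ref{cartanalg} to identify $H^1(G/Z,(Q^Z)^{[1]})$ with $H^1(G,Q^{[1]})$. Your phrase ``extended by the identity on the remaining factors'' does not cover the central torus when $G$ is reductive but not semisimple; without a reduction to the semisimple case the cocycle construction and the final bijection are not actually defined.

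Second, your justification that $[\gamma]$ has no preimage in $H^1(G,Q)$ --- ``a preimage would give an honest separable complement, contradicting the inseparability of $\pi|_H$'' --- conflates the preimage complement $H'$ with $H$; they are different subgroups and a contradiction does not follow immediately from the separability of $\pi|_{H'}$. The paper supplies the missing link: a preimage would give $H'\cong G$, and the Frobenius-twist relationship built into the construction then forces $H\cong G$, which is impossible since $H$ has components of type $B_n$ where $G$ has components of type $C_n$. You need to make that comparison of $H$ with $H'$ explicit, or argue via the explicit identity $h=F(g)\delta(g)$ for $g\in G$, $h\in H$ as the paper does.
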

\begin{proof}We first tackle the case where $H$ (and therefore $G$) is semisimple. Then $H=H_1\dots H_n$. Let $G_i:=\pi(H_i)$. Clearly, $\prod G_i=G$.

Clearly if $H$ is a complement to $Q$ in $GQ$ then it is an abstract complement too. 

Assume $H$ is not a complement to $Q$. We still have $\bar HQ=HQ$ and so by Definition \ref{compdef} we must have either $H(k)\cap Q(k)\neq \{1\}$ or $\Lie(H)\cap \Lie(Q)\neq 0$. Since $H(k)\cap Q(k)\leq Z(H(k))\cap Q(k)$ the elements in the intersection are both semisimple and unipotent, hence just the identity.  It now follows that $H(k)$ is an abstract complement to $Q(k)$ in $G(k)Q(k)$. 

Now as $\Lie(H)\cap \Lie(Q)\neq 0$, we see that $\Lie(H)$ contains a nilpotent ideal $I=\Lie H\cap \Lie Q$, and the map of abstract groups $H(k)\to G(k)$ is an isomorphism. It follows that $p=2$, $H_i$ is of type $B_{n_i}$ for some $i$ and $n_i$ and $I$ is generated by the short root spaces (see \cite[Ch. 0]{Hum95}). Relabel the $H_i$ (and $G_i$) so that $H=H_1\dots H_mH_{m+1}\dots H_n$  with $\Lie H_i\cap I\neq 0$ if $1\leq i\leq m$ and $\Lie H_i\cap I=0$ otherwise; so $H_i$ is of type $B_{n_i}$ for $1\leq i\leq m$.

Since for each $1\leq i\leq m$, the image of $\Lie(H_i)$ is an ideal in $\Lie(G_i)$, we must have $G_i$ of type $C_{n_i}$. Now $H_i$ must be adjoint, or else $I$ contains the non-nilpotent subalgebra $Z(\Lie(H_i))$, since it contains the Lie algebra of a short root $SL_2$-subgroup. So $H_i=SO_{2n_i+1}$ and $G_i=Sp_{2n_i}$, with the map $\pi:H_i\to G_i$ being the well-known special isogeny. Define $\gamma:H\to GQ$ by $\gamma(h)=\pi(h)^{-1}h$. Then since $\gamma(h)\in \ker\pi$ we have in fact $\gamma:H\to Q$, with $\pi(h)=h\gamma(h)^{-1}$.

As $H$ is semisimple, $\Lie H_i\cap \Lie H_j=0$ for $i\neq j$ and as $H_i(k)\cap H_j(k)\leq Z(H_i)(k)=\{1\}$ for $1\leq i\leq m$ (as $H_i$ is adjoint), we have $H\cong H_1\times \dots\times H_m\times (H_{m+1}H_{m+2}\dots H_n)$. Now as $\Lie G_i\cap \Lie G_j=0$ for $i\neq j$ and that $\pi$ is an isomorphism of abstract groups, we must have $G\cong G_1\times\dots\times G_m\times (G_{m+1}\dots G_n)$ similarly. Let $J=\prod_{i=m+1}^nH_i$ and let $K=\prod_{i=m+1}^nH_i$. As the map $\pi|_J$ is an isomorphism of abstract groups $J\to K$ and induces an isomorphism on Lie algebras, $\pi|_J$ is an isomorphism by \cite[5.3.3]{Spr98}.

Now we can define a map $\sigma:G\to H$. Let $\sigma_i$ be the other well-known special isogeny $Sp_{2n_i}\to SO_{2n_i+1}$ for $1\leq i\leq m$, and $\tau:K\to J$ the inverse of $\pi|_J$. Then we define $\sigma:G\to H$ by setting $\sigma|_{H_i}=\sigma_i$ for $1\leq i\leq n$ and $\sigma|_K=\tau$.

Then the composition of maps $G\stackrel{\sigma}{\to} H\stackrel{\pi}{\to}G$ induces the first Frobenius endomorphism $F:H_i\to H_i$ for $1\leq i\leq m$, and an isomorphism on $K$. So under this morphism we have $g\mapsto h\mapsto h\gamma(h)^{-1}=F(g)$ for $g\in H_i$. Defining $\delta=\gamma\circ\sigma$ we have $h=F(g)\delta(g)$. Now one checks that $\delta$ defines a cocycle in $Z^1(G,Q^{[1]})$: for $g_1,g_2\in G_i$ with $1\leq i\leq m$, we have $F(g_1)\delta(g_1)F(g_2)\delta(g_2)=F(g_1)F(g_2)\delta(g_1g_2)$ by the fact that $\sigma_i$ is a homomorphism, so that $\delta(g_1g_2)=\delta(g_1)^{F(g_2)}\delta(g_2)$, showing that $\delta|_{G_i}$ defines a cocycle in $H^1(G_i,Q^{[1]})$. For $g_1,g_2\in K$, $\delta|_K$ defines a cocycle in $Z^1(K,Q)$, so that under the injection $Z^1(K,Q)\to Z^1(K,Q^{[1]})$, $\delta|_K$ defines a cocycle in $Z^1(K,Q^{[1]})$. Hence $\delta$ defines a cocycle in $Z^1(G,Q^{[1]})$.

It is clear that if a group $H^i$ gives rise to $\delta_i$ for $i\in\{1,2\}$ then $H^1=H^2\iff \delta_1=\delta_2$. Moreover if $H^1$ is conjugate to $H^2$ it is easy to check that $\delta_1\sim\delta_2$ in $Z^1(G,Q^{[1]})$.

Certainly $[\delta]$ can have no pre-image $[\gamma]$ in $H^1(G,Q)$: if $H'$ were the complement corresponding to $\gamma$ we would have that $H'\cong G$; and then $H=F(H')\cong G$---a contradiction. This proves (b).

Part (iv) follows from (b) noting that we must be in the exceptional case of Theorem \ref{rationalstab}.

The last statement is now clear: a cocycle class in $H^1(G,Q^{[1]})$ is either in the image of $H^1(G,Q)$ or it is not. This concludes the case where $H$ is semisimple.

Assume $H$ is a closed, connected, reductive subgroup of $GQ$. Since $Z(H)^\circ$ is a torus we may replace $H$ with a conjugate to have $Z(H)^\circ=Z(G)^\circ$. Thus each conjugacy class of subgroups considered has a representative $H$ with $Z:=Z(H)^\circ=Z(G)^\circ$. Let $H_1$ and $H_2$ be two such. Then $H_1$ is $Q$-conjugate to $H_2$ if and only if $H_1/Z$ is $Q^{Z}$-conjugate to $H_2/Z$ in the semidirect product $(G/Z)Q^{Z}$.

Thus the $Q$-conjugacy classes of reductive subgroups $H$ with $\bar H=G$ is in bijection with the $Q^Z$-conjugacy classes of semisimple subgroups $K$ of $(G/Z)Q^{Z}$ with $\bar K=G/Z$, where $\bar K$ denotes the image of $K$ under the quotient morphism $(G/Z)Q^Z\to G/Z$. We have shown already that this is in bijection with $H^1(G/Z,(Q^{Z})^{[1]})$. Now by \ref{lhs} we have $H^1(G/Z,(Q^{Z})^{[1]})\cong H^1(G,Q^{[1]})$, since $H^1(Z,Q^{[1]})=\{1\}$.
\end{proof}

We recall Serre's notion of $G$-complete reducibility from \cite{Ser98}. A subgroup $H$ of $G$ is said to be $G$-completely reducible (or $G$-cr) if whenever $H$ is contained in a parabolic subgroup $P$ of $G$, it is contained in some Levi subgroup of that parabolic.

Using the above theorem we can show that a closed reductive subgroup $H$ of $G$ is $G$-cr if and only if whenever $H$ is in a parabolic subgroup $P$ of $G$, each parabolic subgroup $P_1$ of $H$ is in some Levi subgroup of that parabolic; in other words

\begin{corollary}\label{gcr}Let $H$ be a closed, connected, reductive subgroup of $G$ contained in a parabolic $P=R_u(P)L$ of $G$ and let $P_1$ be any parabolic  subgroup of $H$. Then $H$ is $G$-conjugate to a subgroup of $L$ if and only if $P_1$ is as well.

Moreover, if $U$ is the unipotent radical of a Borel subgroup of $H$ then $H$ is $G$-conjugate to a subgroup of $L$ if and only if $U$ is as well.
\end{corollary}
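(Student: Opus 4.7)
The plan is to restate both equivalences as the vanishing of a single cohomology class $[\gamma_H] \in H^1(\bar H, Q^{[1]})$ obtained from Lemma~\ref{cnbn}, and then to transfer this vanishing to a parabolic of $\bar H$ and to the unipotent radical of a Borel of $\bar H$ using Theorem~\ref{alggrp} and Corollary~\ref{cartanalg}. Throughout, set $Q := R_u(P)$, let $\pi \colon P \to L$ be the projection, and write $\bar H := \pi(H)$, $\bar{P_1} := \pi(P_1)$, and $\bar U := \pi(U)$.

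First I would reduce each occurrence of ``$G$-conjugate to a subgroup of $L$'' to ``$Q$-conjugate to the image in $L$''. For any closed subgroup $K \subseteq P$, all Levi subgroups of $P$ are $Q$-conjugate to $L$, and the $G$-complete-reducibility results of Bate--Martin--Roehrle place $K$ in a Levi of $P$ whenever $K$ is $G$-conjugate to any subgroup of $L$. Thus being $G$-conjugate to a subgroup of $L$ is equivalent to $Q$-conjugacy to the $\pi$-image in $L$, for each of $K = H$, $K = P_1$, and $K = U$. By Lemma~\ref{cnbn}, ``$H$ is $Q$-conjugate to $\bar H \subseteq L$'' is equivalent to $[\gamma_H] = 0$. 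Since the cocycle-to-complement correspondence of Lemma~\ref{comp} applies to arbitrary closed subgroups, the analogous statements for $P_1$ and $U$ become the vanishing of the restrictions $[\gamma_H|_{\bar{P_1}}] \in H^1(\bar{P_1}, Q^{[1]})$ and $[\gamma_H|_{\bar U}] \in H^1(\bar U, Q^{[1]})$ respectively.

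For the first equivalence I would apply Theorem~\ref{alggrp} to the connected reductive group $\bar H$, its parabolic $\bar{P_1}$, and the unipotent algebraic $\bar H$-group $Q^{[1]}$, obtaining an isomorphism $H^1(\bar H, Q^{[1]}) \cong H^1(\bar{P_1}, Q^{[1]})$ induced by restriction. Hence $[\gamma_H] = 0$ if and only if $[\gamma_H|_{\bar{P_1}}] = 0$, which is the first claim. For the moreover part, let $B_H$ be the Borel of $H$ with $R_u(B_H) = U$ and set $\bar B := \pi(B_H)$, a Borel of $\bar H$ with unipotent radical $\bar U$. Theorem~\ref{alggrp} applied to $\bar B$ gives $H^1(\bar H, Q^{[1]}) \cong H^1(\bar B, Q^{[1]})$, and Corollary~\ref{cartanalg} supplies an injection $H^1(\bar B, Q^{[1]}) \hookrightarrow H^1(\bar U, Q^{[1]})$. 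Composing, $[\gamma_H] = 0$ if and only if $[\gamma_H|_{\bar U}] = 0$, as required.

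The main obstacle is the preliminary step: one must verify carefully, for each of the non-reductive subgroups $P_1$ and $U$, that ``$G$-conjugate to a subgroup of $L$'' is equivalent to ``$Q$-conjugate to the $\pi$-image'' and that the associated cocycle is exactly the restriction of $\gamma_H$. This is handled by Lemma~\ref{comp}, but the argument should be written out explicitly, especially in the characteristic~$2$ exceptional case of Lemma~\ref{cnbn}(b) where the cocycle for $H$ genuinely lives in $H^1(\bar H, Q^{[1]})$ rather than $H^1(\bar H, Q)$. Working throughout in $Q^{[1]}$ accommodates both the exceptional and non-exceptional cases uniformly, since Theorem~\ref{alggrp} and Corollary~\ref{cartanalg} apply equally to $Q^{[1]}$ as a unipotent $\bar H$-group.
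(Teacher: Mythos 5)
Your proposal is correct and follows essentially the same route as the paper: both reduce $G$-conjugacy to $R_u(P)$-conjugacy via the Bate--Martin--R\"ohrle--Tange result, encode $H$ by the cocycle class in $H^1(\bar H, R_u(P)^{[1]})$ from Lemma~\ref{cnbn}, and transfer its vanishing to $\bar P_1$ by Theorem~\ref{alggrp} and to $\bar U$ by Theorem~\ref{alggrp} combined with the injectivity statement of Corollary~\ref{cartanalg}. The point you flag as needing care (that the classes attached to $P_1$ and $U$ are exactly the restrictions of $\gamma_H$, including in the characteristic-$2$ exceptional case) is treated with the same brevity in the paper itself.
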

\begin{proof} One direction is trivial, so assume that $P_1$ is $G$-conjugate to a subgroup of $L$. 

Since $H$ is reductive, we have as usual that if $\bar H:=\pi(H)$ denotes the projection of $H$ to $L$ then $HQ=\bar HQ$ and we are in the situation of \ref{cnbn}. As $k$ is algebraically closed, it is perfect, hence $L$, $Q$ and the conjugation action $L\times Q\to Q$ are defined over $k_0$; so we have that $H$ corresponds to some cocycle $\gamma\in Z^1(\bar H,R_u(P)^{[1]})$. Now consider $\bar P_1:=\pi(P_1)\leq \bar H$. And let $\beta$ denote the restriction $\gamma\downarrow^{\bar H}_{\bar P_1}$. Then $\beta\in Z^1(\bar P_1,R_u(P)^{[1]})$.
 
The hypothesis that $P_1$ is $G$-conjugate to $L$ implies that $P_1$ is $R_u(P)$-conjugate to $L$ by \cite[5.9(ii)]{BMRT09}. Thus $\beta$ must be in the trivial cocycle class in $H^1(\bar P_1,R_u(P)^{[1]})$. But by Theorem \ref{alggrp}, $H^1(\bar P_1,R_u(P))\cong H^1(\bar H,R_u(P)^{[1]})$ and so $\gamma$ is in the trivial cocycle class in $H^1(\bar H,R_u(P)^{[1]})$. Thus $H$ is $R_u(P)$-conjugate to a subgroup of $L$ and so clearly it is $G$-conjugate to a subgroup of $L$.

Similarly, for the last part of the corollary, if $P_1=B$ is a Borel subgroup of $H$, then we have $H^1(\bar H,R_u(P)^{[1]})\cong H^1(\bar B,R_u(P)^{[1]})$ and by \ref{cartanalg} the restriction map to $H^1(\bar U,R_u(P)^{[1]})^T$ is injective. Suppose $U=R_uB$ is $R_u(P)$-conjugate to a subgroup of $L$. Then if $H$ corresponds to $\gamma$ and $\beta$ denotes the restriction of $\gamma$ to $\bar U$, then the hypothesis implies that $\beta$ is in the trivial cocycle class. But by the injectivity of the restriction map, $\gamma$ must have been in the trivial cocycle class and so $H$ is $G$-conjugate to $L$.
\end{proof}

\begin{corollary}\label{finitegcr}Let $H$ be a closed, connected, reductive subgroup of $G$ contained in a parabolic $P=R_u(P)L$ of $G$. Then there exists a finite subgroup $H(q)$ of $H$ such that $H(q)$ is conjugate to a subgroup of $L$ if and only if $H$ is also.\end{corollary}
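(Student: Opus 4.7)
The plan is to translate the statement into non-abelian $1$-cohomology and then apply Theorem~\ref{rationalstab} (generic stability) to replace $H$ by a finite subgroup. Write $Q:=R_u(P)$ and $\bar H:=\pi(H)\leq L$. By the argument of Corollary~\ref{gcr}, combining Lemma~\ref{cnbn} with \cite[5.9(ii)]{BMRT09}, the subgroup $H$ is $G$-conjugate to a subgroup of $L$ if and only if the cohomology class $[\gamma]\in H^1(\bar H,Q^{[1]})$ classifying $H$ is trivial. I therefore aim to exhibit a $q$ and a finite subgroup $H(q)\leq H$ with $\pi(H(q))=\bar H(q)$ such that the same equivalence holds with $H(q)$ in place of $H$, and so that triviality of $[\gamma|_{\bar H(q)}]$ detects triviality of $[\gamma]$.

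Since $k=\bar k$, I choose $q_0=p^e$ large enough that $L$, $Q$, the action morphism, and $\bar H$ are all defined over $\mathbb F_{q_0}$; let $F$ denote the resulting Frobenius endomorphism. The projection $\pi|_H\colon H\to\bar H$ is either an isomorphism of algebraic groups or the special isogeny of case~(b) of Lemma~\ref{cnbn}; in either event it is defined over the prime field, so $F$ lifts to a Frobenius endomorphism of $H$, also denoted $F$. Set $H(q_0^r):=H^{F^r}$; then $\pi(H(q_0^r))=\bar H(q_0^r)$. Applying the evident $\mathbb F_{q_0}$-analogue of Theorem~\ref{rationalstab}---whose proof is unchanged upon replacing $p$ by $q_0$---to $\bar H$ acting on $Q^{[1]}$ now produces an integer $r_0$ such that for every $r\geq r_0$ the restriction
\[H^1(\bar H,Q^{[1]})\longrightarrow H^1(\bar H(q_0^r),Q^{[1]})\]
is an isomorphism; no exceptional caveat arises, since $Q^{[1]}$ has already been Frobenius-twisted once.

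To conclude, a routine finite-group analogue of Lemma~\ref{cnbn} identifies the restricted class $[\gamma|_{\bar H(q_0^r)}]$ with the $Q$-conjugacy class of $H(q_0^r)$ in $\bar H(q_0^r)\cdot Q$, so triviality of this class is equivalent to $H(q_0^r)$ being $Q$-conjugate to $\bar H(q_0^r)\leq L$; a further appeal to \cite[5.9(ii)]{BMRT09} for the finite subgroup passes between $Q$-conjugacy and $G$-conjugacy. Setting $q:=q_0^r$ for any $r\geq r_0$ then gives the corollary, and by Remark~\ref{explicit} such a $q$ may in principle be computed explicitly. The main technical obstacle is verifying that the Frobenius structures behave compatibly through the exceptional case~(b), so that $\pi(H(q_0^r))=\bar H(q_0^r)$ and the cocycle $\gamma$ restricts sensibly to $\bar H(q_0^r)$; this is straightforward because the relevant special isogeny is itself defined over the prime field.
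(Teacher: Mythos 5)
Your proof follows the same route as the paper's: reduce to the cohomological classification of Lemma \ref{cnbn} together with \cite[5.9(ii)]{BMRT09} exactly as in Corollary \ref{gcr}, then invoke Theorem \ref{rationalstab} to replace $\bar H$ by a finite subgroup $\bar H(q)$ on which triviality of the restricted class in $H^1(\bar H(q),R_u(P)^{[1]})$ detects triviality of the class of $H$. If anything you are more careful than the paper's one-line argument, which tacitly assumes the acting group is defined over the prime field; your device of enlarging to $\mathbb{F}_{q_0}$ so that $\bar H$ (and $H$) are defined and split there, and then using the $\mathbb{F}_{q_0}$-version of generic cohomology, supplies precisely the detail the paper elides.
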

\begin{proof} This works the same as the previous corollary, using the following fact: As $L$, $R_u(P)$, and the conjugation action of $L$ on $R_u(P)$ are defined over the prime field $k_0$, we can invoke \ref{rationalstab} to obtain an isomorphism $H^1(H(q),R_u(P))\cong H^1(H,R_u(P))$.\end{proof}

\begin{remark}
Note that in above theorem one can use Remark \ref{explicit} to get an explicit value of $q$.\end{remark} 

{\bf Acknowledgements.} The author would like to thank Martin Liebeck, Steve Donkin, George McNinch and Rapha\"el Rouquier for helpful conversations and suggestions. He would also like to thank the anonymous referee for several helpful suggestions for improvement.
{\footnotesize
\bibliographystyle{amsalpha}
\bibliography{stewart}}
\end{document}